\setlist{nolistsep}
  \theoremstyle{plain}
\newtheorem{theorem}{Theorem}
\newtheorem{proposition}{Proposition}[section]
  \theoremstyle{remark}
\newtheorem{remark}[proposition]{Remark}
  \theoremstyle{definition}
\newtheorem{definition}[proposition]{Definition}
\newtheorem{lemma}[proposition]{Lemma}
\definecolor{MyGrey}{rgb}{.704,.704,.646}
\definecolor{MyGreyBis}{rgb}{.804,.804,.756}
\definecolor{MyBlue}{rgb}{.14,.355,.704} 
\definecolor{MyViolet}{rgb}{.545,.1,.5} 
\definecolor{MyRed}{rgb}{0.81,0,0}
\definecolor{MyGreen}{rgb}{.545,.545,0}
\newcommand*{\wop}{\mathrel{\ensuremath{\widetilde{\lhd}}}}
\newcommand*{\op}{\mathrel{\ensuremath{\lhd}}}
\newcommand*{\opp}{l}
\newcommand*{\opr}{r}
\newcommand*{\ZZ}{\mathbb{Z}}
\newcommand*{\RR}{\mathbb{R}}
\newcommand*\Quandle{{\scriptscriptstyle\mathrm{Q}}}
\newcommand*{\Q}{Q}
\newcommand*{\M}{M}
\newcommand*{\D}{D}
\newcommand*{\A}{Arcs}
\newcommand*{\Reg}{Regions}
\newcommand*{\C}{\mathcal C}
\newcommand*{\Csh}{\mathcal C^{sh}}
\newcommand*{\Corb}{\mathcal C_{*}}
\renewcommand*{\Col}{\mathscr C}
\newcommand*{\Colsh}{\mathscr C^{sh}}
\newcommand*{\w}{\omega}
\newcommand*{\wsh}{\omega_{\alpha}}
\newcommand*{\BW}{\mathcal{W}}
\newcommand*{\BWsh}{\mathcal{W}^{sh}}
\newcommand*{\BWpos}{\mathcal{W}^{pos}}
\newcommand*{\BWtw}{\mathcal{W}^{tw}}
\newcommand*{\BWtwsh}{\mathcal{W}^{tw,sh}}
\newcommand*{\x}{\psset{unit=0.3mm}\begin{pspicture}(10,10)
\psline(0,10)(10,0)
\psline[border=0.8,arrowsize=3]{->}(10,10)(0,0)
\end{pspicture}\psset{unit=1mm}}
\newcommand*{\xx}{\psset{unit=0.2mm}\begin{pspicture}(10,10)
\psline[linewidth=0.6](0,10)(10,0)
\psline[border=0.8,arrowsize=2.5,linewidth=0.6]{->}(10,10)(0,0)
\end{pspicture}\psset{unit=1mm}}
\newcommand*{\sgn}{\varepsilon}
\newcommand*{\sgnpos}{\varepsilon^{pos}}
\newcommand*{\ind}{i}
\renewcommand{\O}{\mathcal O}
\newcommand{\Orb}{{Orb}}
\newcommand{\oalpha}{\overline{\alpha}}
\newcommand*\rcircled[1]{$\,$\tikz[baseline=(char.base)]{
            \node[shape=circle,draw,inner sep=1pt] (char) {#1};}}
\begin{document}

\title{The shadow nature of positive and twisted \\ quandle cocycle invariants of knots}

\author{
  Seiichi Kamada\\
  \texttt{skamada@sci.osaka-cu.ac.jp}\\
  Osaka City University
  \and
  Victoria Lebed\\ 
  \texttt{lebed.victoria@gmail.com}\\
  OCAMI, Osaka City University
  \and
  Kokoro Tanaka\\ 
  \texttt{kotanaka@u-gakugei.ac.jp}\\
  Tokyo Gakugei University}

\maketitle

\begin{abstract}
\footnotesize 
Quandle cocycle invariants form a powerful and well developed tool in knot theory. This paper treats their variations --- namely,  positive and twisted quandle cocycle invariants, and shadow invariants. We interpret the former as particular cases of the latter. As an application, several constructions from the shadow world are extended to the positive and twisted cases. Another application is a sharpening of twisted quandle cocycle invariants for multi-component links.
\end{abstract}

{\bf Keywords:} {\footnotesize quandles; multi-term distributive (co)homology; quandle cocycle invariants; positive quandle cocycle invariants; twisted quandle cocycle invariants; shadow colorings.}

\section{Introduction}\label{S:intro}

A \emph{quandle} is a set~$\Q$ endowed with binary operations\footnote{Operation~$\wop$ is deduced from~$\op$ using~\eqref{E:Inv}; we thus often simply speak about a quandle $(\Q,\op)$.} $\op$ and~$\wop$ satisfying 
\begin{align}
&(a \op b)\op c = (a \op c)\op(b \op c), \label{E:SD}\\
&(a \op b) \wop b = (a \wop b) \op b = a,\label{E:Inv}\\
&a \op a = a. \label{E:Idem}
\end{align}
A group with the conjugation operation $a \op b = b^{-1}ab$ is an important example. \linebreak%
 Quandles were introduced in~\cite{Joyce,Matveev} as an efficient tool for constructing knot\footnote{In this paper by \emph{knots} we always mean \emph{oriented knots and links in~$\RR^3$}.} invariants. Concretely, for a quandle~$\Q$ and a knot diagram~$\D$, consider the set $\Col_{\Q}(\D)$ of \emph{$\Q$-colorings} of~$\D$ --- that is, assignments $\C\colon \A(\D) \to \Q$ of elements of~$\Q$ to every arc of~$\D$, satisfying the condition on Figure~\ref{pic:Colorings}\rcircled{A} around each crossing.\footnote{Here and afterwards diagrams with unoriented arcs mean that all coherently oriented versions of these diagrams should be considered.} Axioms \eqref{E:SD}-\eqref{E:Idem} force the number $|\Col_{\Q}(\D)|$ of such colorings to be stable under Reidemeister moves, and thus to give a knot invariant. 
\begin{center}
\begin{pspicture}(0,-2)(60,12)
\psline[linewidth=0.6](0,10)(10,0)
\psline[linewidth=0.6,border=1.8,arrowsize=2]{->}(10,10)(0,0)
\rput[b](0,11){$a$}
\rput[b](10,11){$b$}
\rput[b](0,-3){$b$}
\rput[b](10,-3){$a \lhd b$}
\rput(25,9){\rcircled{A}}
\end{pspicture}
\begin{pspicture}(0,-2)(35,12)
\psframe[fillstyle=hlines,hatchcolor=MyGreyBis,linestyle=none](-3,0)(4,9)
\psframe[fillstyle=vlines,hatchcolor=MyGreyBis,linestyle=none](4,0)(21,9)
\psline[linewidth=0.6,arrowsize=2]{->}(4,9)(4,0)
\rput(4,11){$a$}
\rput(0,3){$\underline{m}$}
\rput(13,3){$\underline{m \op a}$}
\rput(30,9){\rcircled{B}}
\end{pspicture}
\captionof{figure}{Quandle and shadow colorings}
\label{pic:Colorings}
\end{center} 
 
These \emph{quandle invariants} are very powerful, but they do not detect, for instance, the chirality of knots. This flaw was fixed in~\cite{QuandleHom} by enriching colorings with \emph{weights}. Concretely, given an abelian group~$A$, a map $\w\colon\Q \times \Q \to A$ satisfying
\begin{align}
&\w(a, b) +\w(a \op b, c) = \w(a \op c, b \op c) + \w(a, c), \label{E:BWR3}\\
&\w(a, a)= 0 \label{E:BWR1}
\end{align}
is called a \emph{quandle $2$-cocycle} of~$\Q$. (The word ``cocycle'' will be justified below.) The \emph{$\w$-weight} of a $\Q$-colored knot diagram $(\D,\C)$ is defined as a sum
\psset{unit=0.4mm}
\begin{align}
\BW_{\w}(\D,\C) &= \sum_{\begin{pspicture}(13,10)(-3,0)
\psline(0,10)(10,0)
\psline[border=0.8,arrowsize=3]{->}(10,10)(0,0)
\rput(-2,10){$\scriptstyle a$}
\rput(12,10){$\scriptstyle b$}
\end{pspicture}} \sgn(\x) \w(a, b)\label{E:TotalWeight}
\end{align}
\psset{unit=1mm}
over all crossings~$\x$ of~$\D$, where $\sgn(\x)$ is the \emph{sign} of~$\x$ (cf. Figure~\ref{pic:CocInvar}). Axioms \eqref{E:BWR3}-\eqref{E:BWR1} were chosen so that the the multiset of $\w$-weights $\{\,\BW_{\w}(\D,\C) \,|\, \C \in \Col_{\Q}(\D)\,\}$ yields a knot invariant, called a \emph{quandle cocycle invariant}.

\begin{center}
\begin{pspicture}(-25,-2)(50,13)
\rput(-18,5){$\boxed{\sgn(\x)}$}
\psline[linewidth=0.6,arrowsize=2]{->}(0,10)(10,0)
\psline[linewidth=0.6,border=1.8,arrowsize=2]{->}(10,10)(0,0)
\rput[b](0,11){$a$}
\rput[b](10,11){$b$}
\rput[b](0,-3){$b$}
\rput[b](10,-3){$a \lhd b$}
\rput(30,5){$\mapsto {\color{MyRed}\bm{+}}\w(a,b)$}
\end{pspicture}
\begin{pspicture}(0,-2)(46,12)
\psline[linewidth=0.6,arrowsize=2]{->}(10,10)(0,0)
\psline[linewidth=0.6,border=1.8,arrowsize=2]{->}(0,10)(10,0)
\rput[b](0,11){$b$}
\rput[b](10,11){$a \lhd b$}
\rput[b](0,-3){$a$}
\rput[b](10,-3){$b$}
\rput(30,5){$\mapsto {\color{MyRed}\bm{-}}\w(a,b)$}
\end{pspicture}
\captionof{figure}{Quandle cocycle invariants}
\label{pic:CocInvar}
\end{center}

This construction admits several variations. Instead of quandle $2$-cocycles, they use some other cohomological data, which we now recall. Together with a quandle~$\Q$, consider a \emph{$\Q$-module}\footnote{$\Q$-modules appeared under the name \emph{$\Q$-(quandle-)sets} (\cite{RackHom}), and are also known as \emph{$\Q$-shadows} (\cite{ChangNelson}).} --- that is, a set~$\M$ endowed with two maps $\op,\wop\colon \M \times \Q \to \M$\footnote{The notation is slightly abusive but convenient.} satisfying Axioms \eqref{E:SD}-\eqref{E:Inv} for all $a \in \M$, $b,c \in \Q$. The simplest examples are $\Q$ itself, and a one-element set (called a \textit{trivial} $\Q$-module), with evident module operations. Take also an abelian group~$A$. Denote by $C^k(\M,\Q,A)$ the abelian group of maps from~$\M \times \Q^{\times k}$ to~$A$, and put\footnote{Subscripts~$\opp$ \& $\opr$ refer to an interpretation of these maps as \emph{left} \& \emph{right} differentials, cf.~\cite{Lebed1}.}
\begin{align}
(d^k_{\opp} \phi)(m,a_1, \ldots, a_{k+1}) &= \sum_{i=1}^{k+1} (-1)^{i-1} \phi(m \op a_i,\ldots,a_{i-1} \op a_i, a_{i+1}, \ldots,a_{k+1}), \label{E:RackCohomL}\\
(d^k_{\opr} \phi)(m,a_1, \ldots, a_{k+1}) &= \sum_{i=1}^{k+1} (-1)^{i-1} \phi(m, a_1,\ldots,a_{i-1},a_{i+1}, \ldots,a_{k+1}). \label{E:RackCohomR}
\end{align}
Maps~$d^k_{\opp}$ and~$d^k_{\opr}$ turn out to be anti-commuting differentials. Hence for all $\alpha_{\opp}, \alpha_{\opr}$ lying in~$\ZZ$ (or, more generally, in a commutative ring~$R$ if $A$ is an $R$-module), $(C^k(\M,\Q,A),\alpha_{\opp} d^k_{\opp} - \alpha_{\opr} d^k_{\opr})$ is a cochain complex. Denote by $C_{\Quandle}^k(\M,\Q,A)$ the subgroup of maps vanishing on all tuples $(m,a_1, \ldots, a_k)$ with $a_i = a_{i+1}$ for some~$i$. Both $d^k_{\opp}$ and $d^k_{\opr}$ restrict to $C_{\Quandle}^k(\M,\Q,A)$, and thus so do their linear combinations. The \emph{cocycles~/ coboundaries~/ cohomology groups} of $(C_{\Quandle}^k(\M,\Q,A),\alpha_{\opp} d^k_{\opp} - \alpha_{\opr} d^k_{\opr})$ receive the adjective
\begin{itemize}
\item \emph{quandle} if $\alpha_{\opp} = \alpha_{\opr} =1$ (\cite{RackHom,QuandleHom}); 
\item \emph{positive quandle} if $\alpha_{\opp} = -\alpha_{\opr} =1$ (\cite{PosQuandleHom}); 
\item \emph{($t$-)twisted quandle} if $\alpha_{\opp} =1$,  $\alpha_{\opr} =t$, and $A$ is a $\ZZ[t^{\pm 1}]$-module (\cite{TwistedQuandle}); 
\item \emph{two-term distributive} in the general case (\cite{PrzSikora,Prz1}). 
\end{itemize} 
Note that a trivial~$\M$ can be safely excluded from consideration (and from our notations), since $\M \times \Q^{\times k} \cong \Q^{\times k}$; we will often talk about \emph{($\M$-)shadow} cocycles~/ coboundaries~/ cohomology groups if $\M$ is non-trivial.

\emph{Positive} (\cite{PosQuandleHom}) and \emph{twisted} (\cite{TwistedQuandle}) variations of quandle cocycle invariants using positive and, respectively, twisted quandle $2$-cocycles with trivial~$\M$ are recalled in Section~\ref{S:Twisted}. \emph{Shadow invariants} based on shadow quandle $2$-cocycles are also reviewed; together with arc colorings, these use region colorings by elements of~$\M$, respecting the rule on Figure~\ref{pic:Colorings}\rcircled{B}. In literature, mostly the case $\M=\Q$ was considered formally and thoroughly (\cite{RS_Links,CKS_Geometric,Kamada}); the general case seems to be folklore. In this paper, carefully chosen $\Q$-modules allow us to present twisted (Section~\ref{S:TwistedShadow}) and positive (Section~\ref{S:PosShadow}) quandle cocycle invariants as shadow invariants. This instantly gives a proof of their invariance, avoiding technical verifications. Moreover, all results established for shadow invariants can now be applied in the positive and twisted cases. Such results, discussed in Section~\ref{S:Applications}, include the equality of invariants obtained from cohomologous cocycles; a certain symmetry of the invariants; some restrictions on the values of weights one can encounter; higher-dimensional generalizations; and shadow\footnote{Note the ``double-shadow'' situation here.} versions. In Section~\ref{S:TwistedLinks}, the shadow ideas lead to a sharpening of twisted quandle cocycle invariants for multi-component links.

\section{Shadow, positive, and twisted quandle cocycle invariants}\label{S:Twisted}

In this section we recall three variations of the quandle cocycle invariant construction. From now on, fix a quandle $(\Q,\op)$ and an abelian group~$A$.

\medskip
First, together with the arcs of a knot diagram~$\D$, one can color its \emph{regions} (= the connected components of $\RR^2\backslash\D$) by elements of a $\Q$-module $\M$. Concretely, define a \emph{(shadow) $(\Q,\M)$-coloring} of~$\D$ as a $\Q$-coloring $\C$ of the arcs of~$\D$ and an assignment $\Csh\colon \Reg(\D) \to \M$, compatible in the sense of Figure~\ref{pic:Colorings}\rcircled{B}\footnote{In our diagrams, region colors are underlined for a better readability.}. The set of such colorings is denoted by $\Colsh_{\Q,\M}(\D)$. The color of the exterior region $\Delta_{ex}(\D)$ of~$\D$ uniquely determines all the other region colors, hence the $(\Q,\M)$-coloring number $|\Colsh_{\Q,\M}(\D)|$ equals $|\Col_{\Q}(\D)|\cdot |\M|$ and says nothing new about the knot. The situation changes when weights enter the story. Take an \emph{$\M$-shadow quandle $2$-cocycle} of~$\Q$ --- that is, a map $\w\colon \M \times \Q \times \Q \to A$ satisfying
\begin{align}
\w(m, a, b)  &+ \w(m \op b, a \op b, c) + \w(m, b, c) = \notag\\
&\w(m \op c, a \op c, b \op c) + \w(m, a, c) + \w(m \op a, b, c), \label{E:BWR3sh}\\
\w(m, a, a) &= 0. \label{E:BWR1sh}
\end{align}

\begin{definition}
The \emph{(shadow) $\w$-weight} of a $(\Q,\M)$-colored knot diagram $(\D,\C,\Csh)$ is the sum
\psset{unit=0.4mm}
\begin{align}
\BWsh_{\w}(\D,\C,\Csh) &= \sum_{\begin{pspicture}(13,10)(-8,0)
\psline[arrowsize=3]{->}(0,10)(10,0)
\psline[border=0.8,arrowsize=3]{->}(10,10)(0,0)
\rput(-2,10){$\scriptstyle a$}
\rput(12,10){$\scriptstyle b$}
\rput(-6,4){$\scriptstyle \underline{m}$}
\end{pspicture}}\w(m, a, b) 
-\sum_{\begin{pspicture}(13,10)(-8,0)
\psline[arrowsize=3]{->}(10,10)(0,0)
\psline[border=0.8,arrowsize=3]{->}(0,10)(10,0)
\rput(-2,0){$\scriptstyle a$}
\rput(12,0){$\scriptstyle b$}
\rput(-6,6){$\scriptstyle \underline{m}$}
\end{pspicture}}\w(m, a, b)\label{E:TotalWeightSh}
\end{align}
\psset{unit=1mm}
taken over all crossings of~$\D$.
\end{definition}

\begin{theorem}\label{T:ShadowInv}
For any $\Q$-module~$\M$, $m \in \M$, and $\M$-shadow quandle $2$-cocycle $\w$ of~$\Q$, the multiset 
$$\{\,\BWsh_{\w}(\D,\C,\Csh) \,|\,(\C,\Csh) \in \Colsh_{\Q,\M}(\D),\, \Csh(\Delta_{ex}(\D))=m \,\}$$ 
depends only on the underlying knot, and not on the choice of its diagram~$\D$. 
\end{theorem}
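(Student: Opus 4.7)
The plan is to verify invariance under the three Reidemeister moves, exploiting the local nature of both shadow $(\Q,\M)$-colorings and of the weight sum $\BWsh_{\w}$. For each move, performed inside a small disk $D \subset \RR^2$, I would exhibit a bijection between the $(\Q,\M)$-colorings on the two sides that agree with a fixed colored diagram outside $D$ and share the exterior color $m$, and then show that the contributions to $\BWsh_{\w}$ coming from crossings inside $D$ match under this bijection. For the first step, observe that a $(\Q,\M)$-coloring is entirely determined by the arc coloring together with the color of any single region: the rule in Figure~\ref{pic:Colorings}\rcircled{B} propagates region colors across each arc via $\op$ and $\wop$. Thus the desired bijection on shadow colorings follows from the classical bijection on arc colorings under Reidemeister moves, together with the observation that region colors inside $D$ are uniquely prescribed by boundary data.

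For R1, the kink crossing has both arc labels equal to a single color $a$ and some region label $m'$ read off from the context; its weight is $\pm\w(m',a,a)=0$ by~\eqref{E:BWR1sh}, and no other crossings are affected. For R2, the two new (resp.\ removed) crossings carry opposite signs in~\eqref{E:TotalWeightSh}; by axiom~\eqref{E:Inv} the arc colors outside the disk are unchanged and the region squeezed between the two crossings has a single well-defined $\M$-color, so the two contributions $\pm\w(m,a,b)$ cancel. The essential case is R3, where three crossings appear on each side of the move. Labeling the three strands with arc colors $a,b,c$ and fixing the color $m$ of one outer region, one determines all remaining arc colors (via $\op$) and region colors (via the rule of Figure~\ref{pic:Colorings}\rcircled{B}) inside $D$; the six resulting crossing contributions then assemble precisely into the six terms of~\eqref{E:BWR3sh}, with signs coming from~\eqref{E:TotalWeightSh}.

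The main obstacle is the bookkeeping for R3: the cocycle equation~\eqref{E:BWR3sh} is tailored to one coherent choice of strand orientations and crossing signs, whereas Reidemeister III admits several variants. The cleanest resolution is to appeal to a standard generation result (e.g.\ Polyak's theorem) asserting that a single ``braid-like'' version of R3, combined with R2 and planar isotopy, generates all variants, so it suffices to treat that one case. Alternatively, each variant can be checked directly, the additional cocycle identities required being consequences of~\eqref{E:BWR3sh} together with~\eqref{E:Inv} via substitutions of the form $b \mapsto b \wop c$. Either approach reduces the verification to~\eqref{E:BWR3sh}, and no further obstacle arises.
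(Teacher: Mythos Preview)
Your argument is correct and is precisely the standard Reidemeister-move verification one would expect. Note, however, that the paper does not actually supply a proof of this theorem: it is stated as background, with the case $\M=\Q$ attributed to the literature and the general case described as folklore. So there is no ``paper's own proof'' to compare against; your proposal simply fills in the omitted standard argument, and the use of Polyak's generating set (or direct case-checks via~\eqref{E:Inv}) for the R3 variants is the right way to close the bookkeeping.
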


The (extremely rich) knot invariant thus obtained are called \emph{shadow quandle cocycle invariants}, or simply \emph{shadow invariants}.

\medskip
Another variation uses a \emph{positive quandle $2$-cocycle} of~$\Q$ --- that is, a map $\w\colon\Q \times \Q \to A$ satisfying~\eqref{E:BWR1} and
\begin{align}
&\w(a, c) +\w(a \op b, c) = \w(a \op c, b \op c) + \w(a, b) + 2\w(b, c). \label{E:BWR3pos}
\end{align}

\begin{definition}
The \emph{(positive) $\w$-weight} of a $\Q$-colored knot diagram $(\D,\C)$ is the sum
\psset{unit=0.4mm}
\begin{align}
\BWpos_{\w}(\D,\C) &= \sum_{\begin{pspicture}(13,10)(-3,0)
\psline(0,10)(10,0)
\psline[border=0.8,arrowsize=3]{->}(10,10)(0,0)
\rput(-2,10){$\scriptstyle a$}
\rput(12,10){$\scriptstyle b$}
\end{pspicture}} \sgnpos(\x) \w(a, b),\label{E:TotalWeightPos}
\end{align}
\psset{unit=1mm}
where the sign $\sgnpos(\x)$ is defined on Figure~\ref{pic:ColoringsPos} (here the checkerboard coloring of the diagram's regions is used, with the exterior region declared white). 
\end{definition}

\begin{center}
\begin{pspicture}(-30,0)(35,10)
\rput(-25,5){$\boxed{\sgnpos(\x)}$}
\pspolygon[fillstyle=solid,fillcolor=MyGrey,linecolor=MyGrey](0,0)(10,0)(5,5)
\pspolygon[fillstyle=solid,fillcolor=MyGrey,linecolor=MyGrey](0,10)(10,10)(5,5)
\psline[linewidth=0.6](10,10)(0,0)
\psline[linewidth=0.6](0,10)(4,6)
\psline[linewidth=0.6](6,4)(10,0)
\rput(20,5){$\leadsto \color{MyBlue} \bm{+}$}
\end{pspicture}
\begin{pspicture}(0,0)(30,10)
\pspolygon[fillstyle=solid,fillcolor=MyGrey,linecolor=MyGrey](0,0)(10,0)(5,5)
\pspolygon[fillstyle=solid,fillcolor=MyGrey,linecolor=MyGrey](0,10)(10,10)(5,5)
\psline[linewidth=0.6](0,10)(10,0)
\psline[linewidth=0.6](10,10)(6,6)
\psline[linewidth=0.6](4,4)(0,0)
\rput(20,5){$\leadsto \color{MyBlue} \bm{-}$}
\end{pspicture}
\captionof{figure}{Positive quandle cocycle invariants}
\label{pic:ColoringsPos}
\end{center}

\begin{theorem}[\cite{PosQuandleHom}]\label{T:PosInv}
For any positive quandle $2$-cocycle $\w$ of~$\Q$, the multiset $\{\,\BWpos_{\w}(\D,\C) \,|\, \C \in \Col_{\Q}(\D) \,\}$ yields a knot invariant. 
\end{theorem}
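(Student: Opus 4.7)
My plan is to realize the positive weight $\BWpos_{\w}$ as a shadow weight $\BWsh$ for a carefully chosen $\Q$-module $\M$ and a shadow 2-cocycle built from $\w$, thereby reducing Theorem~\ref{T:PosInv} to a direct application of Theorem~\ref{T:ShadowInv}. The idea is that the bookkeeping supplied by a shadow coloring should be exactly what is needed to convert the orientation-based sign $\sgn$ into the slope-based sign $\sgnpos$ at each crossing.

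A natural first candidate is $\M=\ZZ/2$ with the constant flip action $m\op a=1-m$. The quandle-module axioms then hold trivially, and fixing the exterior color $\Csh(\Delta_{ex})=0$ extends each $\Q$-coloring $\C$ uniquely to a shadow coloring $(\C,\Csh)$: the induced $\Csh$ is the standard checkerboard with white exterior, and the assignment $\C\mapsto(\C,\Csh)$ is a bijection onto the set of $(\Q,\M)$-colorings with that prescribed exterior color. Given a positive 2-cocycle $\w$, one defines $\w^{sh}(m,a,b)=(-1)^m\w(a,b)$. Substituting into~\eqref{E:BWR3sh}, the overall factor $(-1)^m$ pulls out while $(-1)^{m\op x}=-(-1)^m$ redistributes signs; the resulting identity is exactly the positive cocycle condition~\eqref{E:BWR3pos}, the $+2\w(b,c)$ summand appearing because $\w(b,c)$ enters with opposite signs on the two sides of~\eqref{E:BWR3sh}. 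Condition~\eqref{E:BWR1sh} is immediate from~\eqref{E:BWR1}, so $\w^{sh}$ is an honest $\M$-shadow quandle 2-cocycle of $\Q$.

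To match the weights it remains to establish the per-crossing identity $\sgn(x)(-1)^{m_x}=\sgnpos(x)$, where $m_x$ is the checkerboard color of the region picked out by the shadow formula at the crossing~$x$. Once this is in hand, summation yields $\BWsh_{\w^{sh}}(\D,\C,\Csh)=\BWpos_{\w}(\D,\C)$, and Theorem~\ref{T:ShadowInv} transfers knot-invariance from the shadow multiset to the positive-weight multiset.

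The main obstacle is precisely this local sign identity. Both $\sgn(x)$ and $\sgnpos(x)$ are local at $x$, yet $m_x$ is determined globally by arc-counts from the exterior; the saving grace is that the flip action makes $m_x$ a pure parity that should combine with $\sgn(x)$ exactly as needed to produce $\sgnpos(x)$. Verifying this requires a case analysis over the local crossing configurations (slope of the over-strand, which strand is over, and relative orientations of the two strands), tracking in each case how the quandle labels and the $\underline{m}$-region respond. If $\M=\ZZ/2$ turns out insufficient in some configurations---for instance when reversing the orientation of a single component flips $\sgn$ without touching $\sgnpos$ or the naive $m_x$---one refines $\M$ by adjoining orientation-sensitive arc data, while the overall structure of the reduction remains the same.
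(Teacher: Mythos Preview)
Your approach is essentially the paper's own: the paper first interprets $\alpha$-twisted invariants as $\ZZ$-shadow invariants via $\wsh(m,a,b)=\alpha^{-m}\w(a,b)$ and then specializes to $\alpha=-1$, reducing $\ZZ$ to $\ZZ_2$ and arriving at exactly your $\M=\ZZ/2$ with $m\op a=m+1$ and $\w^{sh}(m,a,b)=(-1)^m\w(a,b)$ (Theorem~\ref{T:Pos}). Your hedging in the last paragraph is unnecessary: the identity $\sgn(\x)(-1)^{m_{\x}}=\sgnpos(\x)$ does hold (this is the content of Lemma~\ref{L:WeightsPosTwisted}), and the orientation-reversal worry dissolves once you notice that reversing a strand's orientation not only flips $\sgn(\x)$ but also moves the source region $\Delta(\x)$ to an adjacent region of opposite checkerboard color, so $(-1)^{m_{\x}}$ flips too and the product is unchanged---no refinement of $\M$ is needed.
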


The knot invariant obtained is referred to as \emph{positive quandle cocycle invariant}.

\medskip
In order to present the third generalization of quandle cocycle invariants we are interested in,
some technical but intuitive definitions are necessary.
\begin{definition}\label{D:IndexRegion}
\begin{itemize}
\item The \emph{index}\footnote{The term \emph{Alexander number} is sometimes used instead (\cite{AlexanderNb,CS_surfaces,CKS_AlexanderNb}).} $\ind(\Delta)$ of a region~$\Delta$ in a knot diagram $\D$ is the algebraic intersection number for~$\D$ and a path~$\gamma$ connecting $\Delta$ to the exterior region $\Delta_{ex}(\D)$. Sign conventions are indicated in Figure~\ref{pic:Index}\rcircled{A}; $\gamma$ and~$\D$ are supposed to have a finite number of simple transverse intersections. 

\item The \emph{source region} $\Delta(\x)$ of a crossing $\x$ in~$\D$ is defined in Figure~\ref{pic:Index}\rcircled{B}\footnote{In our diagrams, a solid crossing stands for a crossing of arbitrary sign.}\footnote{In other words, it is the region from which all normals to the arcs of~$\x$ point, hence the term.}. 

\item The \emph{index} of a crossing~$\x$ is set to be $\ind(\x) = \ind(\Delta(\x))$. 
 \end{itemize}
\end{definition}

The index of a region/crossing does not depend on the choice of path~$\gamma$ satisfying the description above, and is thus well defined.

\begin{center}
\begin{pspicture}(-10,-5)(55,15)
\psline[linewidth=0.5,linecolor=MyBlue,arrowsize=2]{->}(35,5)(-2,5)
\rput(38,5){$\Delta$}
\rput(-10,5){$\Delta_{ex}(\D)$}
\rput(3,3){$\color{MyBlue} \gamma$}
\pscurve[linewidth=0.3,arrowsize=1.5]{->}(11,13)(10,5)(13,0)
\rput(10,15){$\D$}
\rput(8,7){$\color{MyRed} \bm{+}$}
\pscurve[linewidth=0.3,arrowsize=1.5]{->}(15,13)(20,8)(18,5)(20,0)
\rput(16,15){$\D$}
\rput(16,7){$\color{MyRed} \bm{+}$}
\pscurve[linewidth=0.3,arrowsize=1.5]{->}(23,0)(26,5)(27,13)
\rput(26,15){$\D$}
\rput(24,7){$\color{MyRed} \bm{-}$}
\rput(14,-5){${\ind(\Delta) = {\color{MyRed} \bm{+}}1 {\color{MyRed} \bm{+}}1 {\color{MyRed} \bm{-}}1 = 1}$}
\rput(45,10){\rcircled{A}}
\psline[linestyle=dotted](50,-5)(50,15)
\end{pspicture}
\psset{unit=1.2mm}
\begin{pspicture}(-7,-2)(20,12)
\pspolygon[fillstyle=hlines,hatchcolor=MyGreyBis,linestyle=none](0,0)(-7,0)(-7,10)(0,10)(5,5)
\psline[linewidth=0.4,arrowsize=2]{->}(10,10)(0,0)
\psline[linewidth=0.4,arrowsize=2]{->}(0,10)(10,0)
\psline[linewidth=0.2,arrowsize=0.8]{->}(3,3)(5,1)
\psline[linewidth=0.2,arrowsize=0.8]{->}(3,7)(5,9)
\rput(-2.4,5){$\Delta(\x)$}
\rput(15,11){\rcircled{B}}
\psline[linestyle=dotted](20,-2)(20,14)
\end{pspicture}
\psset{unit=1mm}
\begin{pspicture}(-5,-5)(20,15)
\psline[linewidth=0.6,arrowsize=2]{->}(4,9)(4,-2)
\rput(4,11){$a$}
\rput(0,3){${m}$}
\rput(13,3){${m +1}$}
\rput(20,10){\rcircled{C}}
\end{pspicture}
\captionof{figure}{Regions and indices}\label{pic:Index}
\end{center}

Until the end of Section~\ref{S:Applications}, suppose~$A$ to be a module over a commutative ring~$R$, and fix an $\alpha \in R^*$. Take an \emph{$\alpha$-twisted quandle $2$-cocycle} of~$\Q$ --- that is, a map $\w\colon\Q \times \Q \to A$ satisfying~\eqref{E:BWR1} and
\begin{align}
&\w(a \op c, b \op c) - \alpha \w(a, b) - \w(a \op b, c) + \alpha \w(a, c) + (1 - \alpha)\w(b, c) =0. \label{E:BWR3multi}
\end{align}

\begin{definition}
The \emph{(twisted, or $\alpha$-twisted) $\w$-weight} of a $\Q$-colored knot diagram $(\D,\C)$ is the sum
\psset{unit=0.4mm}
\begin{align}
\BWtw_{\w}(\D,\C,\alpha) &= \sum_{\begin{pspicture}(13,10)(-3,0)
\psline(0,10)(10,0)
\psline[border=0.8,arrowsize=3]{->}(10,10)(0,0)
\rput(-2,10){$\scriptstyle a$}
\rput(12,10){$\scriptstyle b$}
\end{pspicture}} \sgn(\x) \alpha^{-\ind(\x)} \w(a, b)\label{E:TotalWeightMulti}
\end{align}
\psset{unit=1mm}
taken over all crossings of~$\D$. (Recall that $\sgn(\x)$ is the usual sign of~$\x$, cf. Figure~\ref{pic:CocInvar}.)
\end{definition}

\begin{theorem}[\cite{TwistedQuandle}]\label{T:Multi}
For any $\alpha \in R^*$ and any $\alpha$-twisted quandle $2$-cocycle $\w$ of~$\Q$, the multiset $\{\,\BWtw_{\w}(\D,\C,\alpha) \,|\, \C \in \Col_{\Q}(\D)\,\}$ yields a knot invariant.
\end{theorem}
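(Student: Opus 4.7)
The plan is to deduce the theorem from the shadow invariance statement (Theorem~\ref{T:ShadowInv}) by realising the $\alpha$-twisted weight as a shadow weight for an appropriately chosen $\Q$-module~$\M$, shadow $2$-cocycle~$\wsh$, and exterior region color.

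Concretely, I would take $\M = \ZZ$ equipped with the trivial-looking $\Q$-module operations $m \op a := m + 1$ and $m \wop a := m - 1$, both independent of $a \in \Q$; since the operations ignore~$a$, the module axioms~\eqref{E:SD}--\eqref{E:Inv} follow by inspection. Given an $\alpha$-twisted $2$-cocycle $\w$, I would then define $\wsh \colon \ZZ \times \Q \times \Q \to A$ by $\wsh(m, a, b) := \alpha^{-m}\, \w(a, b)$. Relation~\eqref{E:BWR1sh} is immediate from~\eqref{E:BWR1}; substituting $\wsh$ into~\eqref{E:BWR3sh}, clearing the common factor $\alpha^{-(m+1)}$, and rearranging yields exactly the $\alpha$-twisted relation~\eqref{E:BWR3multi}. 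The shift by $+1$ built into the $\Q$-module structure is precisely what produces the factors of~$\alpha$ that appear in front of selected terms of~\eqref{E:BWR3multi}.

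Next, I would color the exterior region of~$\D$ by~$0$; this extends every $\Q$-coloring $\C$ of~$\D$ to a unique shadow coloring $(\C, \Csh) \in \Colsh_{\Q, \ZZ}(\D)$, establishing a bijection between $\Col_{\Q}(\D)$ and the shadow colorings featured in Theorem~\ref{T:ShadowInv}. The rule $m \op a = m + 1$ is engineered so that $\Csh(\Delta) = \ind(\Delta)$ for every region~$\Delta$: both quantities increase by exactly~$1$ when one crosses an arc from its left-hand to its right-hand side (in the arc's orientation), and they agree on~$\Delta_{ex}(\D)$. Consequently, at each crossing~$\x$ the source region carries the color $\ind(\x)$, so its contribution to $\BWsh_{\wsh}$ is $\sgn(\x)\,\wsh(\ind(\x), a, b) = \sgn(\x)\,\alpha^{-\ind(\x)}\,\w(a, b)$, which matches term-by-term its contribution to $\BWtw_{\w}(\D, \C, \alpha)$. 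Summing over all crossings gives $\BWsh_{\wsh}(\D, \C, \Csh) = \BWtw_{\w}(\D, \C, \alpha)$, and Theorem~\ref{T:ShadowInv} immediately delivers the conclusion.

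The only delicate point is the bookkeeping reconciling the shadow propagation rule of Figure~\ref{pic:Colorings}\rcircled{B} with the sign convention of Figure~\ref{pic:Index}\rcircled{A}, which is needed to confirm $\Csh(\Delta) = \ind(\Delta)$ rather than $-\ind(\Delta)$. Should the conventions produce the opposite sign, one simply replaces $\alpha^{-m}$ by $\alpha^{m}$ in the definition of $\wsh$; the cocycle check then reproduces~\eqref{E:BWR3multi} with $\alpha^{-1}$ in place of $\alpha$, which is still a unit of~$R$, so the argument goes through without any real obstacle.
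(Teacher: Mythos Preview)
Your proposal is correct and follows essentially the same route as the paper: the paper likewise takes the $\Q$-module $\ZZ$ with $m \op a = m+1$, defines $\wsh(m,a,b)=\alpha^{-m}\w(a,b)$, checks via Lemmas~\ref{L:IndColoring}--\ref{L:WeightsUnitaryShadow} that this is a shadow $2$-cocycle and that the region coloring with $\Csh(\Delta_{ex})=0$ reproduces the index, and then invokes Theorem~\ref{T:ShadowInv}. Your sign-convention worry is unnecessary here---the paper's conventions do give $\Csh(\Delta)=\ind(\Delta)$ with $\alpha^{-m}$---but your fallback remark is harmless.
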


This knot invariant is called \emph{($\alpha$-)twisted quandle cocycle invariant}.

\medskip
To prove Theorems~\ref{T:PosInv} and~\ref{T:Multi}, one could check directly that Reidemeister moves and induced coloring changes preserve the positive/twisted $\w$-weight, as it was done in~\cite{PosQuandleHom,TwistedQuandle}.  Here we prefer a more conceptual solution: in the following sections, the multisets from these theorems are presented as shadow invariants. 

\section{Twisted quandle cocycle invariants via shadows}\label{S:TwistedShadow}

The first ingredient we need is the $\Q$-module $\ZZ$ with $m \op a =m+1$. For this $\Q$-module, the region coloring rule from Figure~\ref{pic:Colorings}\rcircled{B} specializes to the one from Figure~\ref{pic:Index}\rcircled{C}. Comparing it with the definition of the index of a region, one proves

\begin{lemma}\label{L:IndColoring}
Consider a $(\Q,\ZZ)$-coloring of a knot diagram~$\D$ such that the exterior region receives color~$0$. Then the color of any region~$\Delta$ coincides with~$\ind(\Delta)$. 
\end{lemma}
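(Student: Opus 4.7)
The plan is a direct walk-along-$\gamma$ argument. First I note that for the $\Q$-module $(\ZZ, m \op a = m+1)$, the inverse operation is automatically $m \wop a = m-1$, both being independent of the arc label $a \in \Q$. Hence at every arc of $\D$, the region coloring rule of Figure~\ref{pic:Colorings}\rcircled{B} forces the two adjacent region colors to differ by exactly one unit, with the larger value lying to the left of the oriented arc and the smaller to its right, as already depicted in Figure~\ref{pic:Index}\rcircled{C}.

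Next, I pick a path $\gamma$ from $\Delta$ to $\Delta_{ex}(\D)$ meeting $\D$ in finitely many simple transverse points and avoiding crossings, and traverse it from start to end. At each intersection the color of the region visited by $\gamma$ jumps by $\pm 1$, and this jump is exactly the opposite of the local contribution to the intersection number from Figure~\ref{pic:Index}\rcircled{A}: a positive intersection corresponds to $\gamma$ crossing from the left of the arc to its right, and by the previous paragraph the color drops by one in that direction. Summing the jumps along $\gamma$ thus gives $\Csh(\Delta_{ex}(\D)) - \Csh(\Delta) = -\ind(\Delta)$, and since the left-hand side equals $-\Csh(\Delta)$ by the hypothesis $\Csh(\Delta_{ex}(\D))=0$, we conclude $\Csh(\Delta) = \ind(\Delta)$.

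The only real obstacle is lining up the three sign conventions involved --- for the crossing of $\gamma$ with an arc (Figure~\ref{pic:Index}\rcircled{A}), for the shadow rule (Figure~\ref{pic:Colorings}\rcircled{B}), and for its specialization to $\ZZ$ (Figure~\ref{pic:Index}\rcircled{C}) --- but once this diagrammatic bookkeeping is done, no further computation is needed, and the lemma reads as a consistency check built into the authors' conventions. Note that the same argument shows independence of $\gamma$, reproving the well-definedness of $\ind(\Delta)$ stated just before the lemma.
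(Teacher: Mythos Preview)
Your argument is correct and is precisely the comparison the paper has in mind: the text preceding the lemma merely says that the specialized region rule from Figure~\ref{pic:Index}\rcircled{C} should be compared with the definition of~$\ind(\Delta)$, and you have simply written this comparison out in full. The sign bookkeeping (larger color to the left of the arc, a positive $\gamma$--$D$ intersection corresponding to $\gamma$ passing from left to right) matches the paper's conventions, so there is nothing to add.
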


The region coloring from the lemma will be denoted by~$\C^{ind}\colon\Reg(\D) \to \ZZ$. Remark that it is compatible with any arc coloring.

Next, we show how to transform a twisted quandle $2$-cocycle into a (non-twisted) quandle $2$-cocycle, at the cost of introducing our non-trivial $\Q$-module $\ZZ$.
\begin{lemma}\label{L:CocyclesUnitaryShadow}
A map $\w \colon \Q \times \Q \to A$ is an $\alpha$-twisted quandle $2$-cocycle if and only if the map
\begin{align}
\wsh\colon \ZZ \times \Q \times \Q &\longrightarrow A,\notag\\
(m,a,b) &\longmapsto \alpha^{-m} \w(a,b)\label{E:AlphaWeight}
\end{align}
is a shadow quandle $2$-cocycle.
\end{lemma}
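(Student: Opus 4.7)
The plan is to verify the equivalence by direct substitution, exploiting that the $\Q$-module structure on~$\ZZ$ is extremely simple: $m\op a = m+1$ for all $a\in\Q$, independent of~$a$. This means that in any shadow 2-cocycle identity for~$\wsh$, every occurrence of $m\op x$ is just $m+1$, so only two shadow indices, $m$ and $m+1$, appear.

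First I would handle the degenerate condition: since $\wsh(m,a,a)=\alpha^{-m}\w(a,a)$ and $\alpha\in R^*$, the vanishing condition \eqref{E:BWR1sh} for~$\wsh$ is equivalent to~\eqref{E:BWR1} for~$\w$. Next, I plug the formula \eqref{E:AlphaWeight} into the shadow 3-term identity \eqref{E:BWR3sh}. Each term becomes a power of $\alpha^{-m}$ or $\alpha^{-m-1}$ times a value of~$\w$; more precisely, the left-hand side becomes
\begin{align*}
\alpha^{-m}\w(a,b) + \alpha^{-m-1}\w(a\op b,c) + \alpha^{-m}\w(b,c),
\end{align*}
and the right-hand side becomes
\begin{align*}
\alpha^{-m-1}\w(a\op c,b\op c) + \alpha^{-m}\w(a,c) + \alpha^{-m-1}\w(b,c).
\end{align*}
Multiplying through by $\alpha^{m+1}$ (legitimate since $\alpha\in R^*$) eliminates the $m$-dependence entirely; rearranging then yields exactly the $\alpha$-twisted cocycle condition~\eqref{E:BWR3multi}. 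Since $\alpha^{m+1}$ is invertible, the implication goes both ways, giving the desired equivalence for each fixed~$m$; and the identity for one~$m$ suffices because the calculation is uniform in~$m$.

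There is no real obstacle here — the lemma is essentially a normalization statement saying that the $\alpha$-twist is exactly the information stored by a $\ZZ$-shadow in which going under an arc shifts the index by~$1$. The only place to be careful is to keep track of which arguments receive the $m+1$ shift (namely the first argument wherever it appears after going under a crossing, and every argument of the form~$m\op\cdot$) and to remember that $\alpha\in R^*$ throughout, so that multiplication by $\alpha^{m+1}$ preserves equivalences rather than merely implications.
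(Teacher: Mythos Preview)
Your proof is correct and follows essentially the same route as the paper's own argument: both observe that~\eqref{E:BWR1sh} for~$\wsh$ reduces to~\eqref{E:BWR1} for~$\w$, then expand~\eqref{E:BWR3sh} using $m\op x=m+1$ to obtain the displayed identity in powers of~$\alpha^{-m}$ and~$\alpha^{-(m+1)}$, and finally clear the common factor using the invertibility of~$\alpha$ to recover~\eqref{E:BWR3multi}. The paper is terser, but the content is identical.
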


\begin{proof}
Conditions~\eqref{E:BWR1} for~$\w$ and~\eqref{E:BWR1sh} for~$\wsh$ clearly coincide. Further, condition~\eqref{E:BWR3sh} for~$\wsh$ reads
\begin{align*}
\alpha^{-m}\w(a, b) & + \alpha^{-(m+1)}\w( a \op b, c) + \alpha^{-m}\w(b, c) =\\
& \alpha^{-(m+1)}\w(a \op c, b \op c) + \alpha^{-m}\w(a, c) + \alpha^{-(m+1)}\w(b, c), 
\end{align*}
which, due to the invertibility of~$\alpha$, is the same as~\eqref{E:BWR3multi}.
\end{proof}

We further compare $\alpha$-twisted $\w$-weights with shadow $\wsh$-weights:
\begin{lemma}\label{L:WeightsUnitaryShadow} 
For any $\Q$-colored diagram $(\D,\C)$ and any $\alpha$-twisted quandle $2$-cocycle~$\w$, one has 
\begin{align*}
\BWtw_{\w}(\D,\C,\alpha) &= \BWsh_{\wsh}(\D,\C,\C^{ind}).
\end{align*}
\end{lemma}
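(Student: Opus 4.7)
The identity reduces to a term-by-term comparison of the two weight sums, made possible by Lemma~\ref{L:IndColoring}. At each crossing $\x$ of $\D$, that lemma identifies the color $\C^{ind}(\Delta(\x))$ of the source region with the integer $\ind(\Delta(\x)) = \ind(\x)$.

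First I would expand the right-hand side $\BWsh_{\wsh}(\D,\C,\C^{ind})$ according to definition~\eqref{E:TotalWeightSh}: every positive crossing contributes $+\wsh(m, a, b)$ and every negative crossing contributes $-\wsh(m, a, b)$, where $a$ and $b$ are the arc colors surrounding~$\x$ and $m$ is the color assigned to the source region at that crossing. Substituting $m = \ind(\x)$ via Lemma~\ref{L:IndColoring} and then unfolding the definition~\eqref{E:AlphaWeight} of $\wsh$, the contribution at each crossing becomes
\[
 \sgn(\x)\, \wsh(\ind(\x), a, b) \;=\; \sgn(\x)\, \alpha^{-\ind(\x)}\, \w(a, b).
\]
Summing over all crossings of~$\D$ reproduces the expression~\eqref{E:TotalWeightMulti} defining $\BWtw_{\w}(\D,\C,\alpha)$, finishing the argument.

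The only verification that is not automatic is that the pictorial conventions agree: namely, that in both~\eqref{E:TotalWeightSh} and~\eqref{E:TotalWeightMulti} the symbols $a, b$ refer to the same pair of arcs at each given crossing, and that the local sign preceding each summand in~\eqref{E:TotalWeightSh} (plus on the first pictogram, minus on the second) coincides with the $\sgn(\x)$ of Figure~\ref{pic:CocInvar}. Both facts follow from a direct visual comparison of the diagrams attached to the summation symbols. I do not anticipate a genuine obstacle here; the whole lemma is essentially a bookkeeping observation that repackages the twisted weight as a specific shadow weight, with the factor $\alpha^{-\ind(\x)}$ absorbed into the region variable of $\wsh$.
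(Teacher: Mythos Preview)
Your proof is correct and follows exactly the approach of the paper, which dispatches the lemma in one line (``It follows from the definitions and lemmas above''); you have simply unpacked that sentence into the explicit term-by-term comparison via Lemma~\ref{L:IndColoring} and formula~\eqref{E:AlphaWeight}. The pictorial-convention check you flag at the end is indeed routine and causes no trouble.
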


\begin{proof}
It follows from the definitions and lemmas above.
\end{proof}

Putting everything together, one gets
\begin{theorem}\label{T:Shadow}
For a knot diagram~$\D$ and an $\alpha$-twisted quandle $2$-cocycle~$\w$, the multiset $\{\,\BWtw_{\w}(\D,\C,\alpha) \,|\, \C \in \Col_{\Q}(\D)\,\}$ of $\alpha$-twisted $\w$-weights of~$\D$ coincides with the multiset
$$\{\,\BWsh_{\wsh}(\D,\C,\Csh) \,|\, (\C,\Csh) \in \Colsh_{\Q,\ZZ}(\D),\, \Csh(\Delta_{ex}(\D)) = 0\,\}$$
of its $\ZZ$-shadow $\wsh$-weights with the exterior region colored by~$0$.
\end{theorem}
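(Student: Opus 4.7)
The plan is to reduce the statement to the three lemmas already established, by producing an explicit bijection between the indexing sets of the two multisets that preserves weights.

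First I would describe the bijection. On the right-hand side we consider shadow colorings $(\C,\Csh)$ with $\Csh(\Delta_{ex}(\D))=0$. Since a shadow region coloring is completely determined by its value on the exterior region (as noted just before the definition of the $\M$-shadow $2$-cocycle), fixing this value to $0$ leaves only the choice of the arc coloring $\C$. Lemma~\ref{L:IndColoring} then identifies the forced shadow coloring with $\C^{ind}$, and the remark following that lemma guarantees compatibility with an arbitrary arc coloring $\C \in \Col_{\Q}(\D)$. Hence the map $\C \mapsto (\C,\C^{ind})$ is a bijection
$$\Col_{\Q}(\D) \;\longleftrightarrow\; \{(\C,\Csh) \in \Colsh_{\Q,\ZZ}(\D)\,|\, \Csh(\Delta_{ex}(\D))=0\}.$$

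Next I would invoke Lemma~\ref{L:WeightsUnitaryShadow}, which states precisely that the weight $\BWtw_{\w}(\D,\C,\alpha)$ on the left corresponds under this bijection to $\BWsh_{\wsh}(\D,\C,\C^{ind})$ on the right. Together with the bijection above, this gives the equality of the two multisets term by term, hence as multisets. Implicit here is Lemma~\ref{L:CocyclesUnitaryShadow}, which ensures that $\wsh$ is a genuine shadow quandle $2$-cocycle, so that the right-hand multiset is defined and is a knot invariant by Theorem~\ref{T:ShadowInv}.

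There is essentially no obstacle: the content has been packaged into the preceding lemmas, and the proof amounts to combining them. The only step worth writing out explicitly is the observation that the color-$0$ condition on the exterior region pins the shadow coloring down uniquely to the index coloring, so that no extra choices appear on the right-hand side beyond those on the left.
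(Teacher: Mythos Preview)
Your proof is correct and follows essentially the same route as the paper: both use Lemma~\ref{L:IndColoring} to identify the unique region coloring with $\Csh(\Delta_{ex}(\D))=0$ as~$\C^{ind}$, and Lemma~\ref{L:WeightsUnitaryShadow} to match the weights under the resulting bijection $\C \mapsto (\C,\C^{ind})$. The only difference is cosmetic --- the paper transforms the weights first and then identifies the indexing sets, while you set up the bijection first; your explicit mention of Lemma~\ref{L:CocyclesUnitaryShadow} is a welcome clarification that the paper leaves implicit.
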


\begin{proof}
According to Lemma~\ref{L:WeightsUnitaryShadow}, the first multiset coincides with
$\{\,\BWsh_{\wsh}(\D,\C,\C^{ind}) \,|$ $\C \in \Col_{\Q}(\D)\,\}$. %
 But this is precisely our second multiset, since Lemma~\ref{L:IndColoring} allows to replace any region $\ZZ$-coloring $\Csh$ satisfying $\Csh(\Delta_{ex}(\D)) = 0$ with~$\C^{ind}$. 
\end{proof}

This theorem yields the announced shadow interpretation of twisted quandle cocycle invariants. In particular, combined with Theorem~\ref{T:ShadowInv}, it immediately implies Theorem~\ref{T:Multi}.

\section{Applications}\label{S:Applications}

We now turn to applications of our shadow interpretation of twisted quandle cocycle invariants. Some results on shadow invariants are first recalled. Most of them seem to be folklore; their proofs are included for completeness. Their consequences in the twisted setting are then discussed.

\begin{lemma}\label{L:ShadowCoboundary}
The shadow weights associated to a shadow quandle $2$-coboundary vanish. 
\end{lemma}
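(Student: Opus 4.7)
Suppose $\w = (d^1_{\opp} - d^1_{\opr})\phi$ for a shadow $1$-cochain $\phi\colon\M\times\Q \to A$; unfolding the differentials yields
\[
\w(m,a,b) = \phi(m\op a,b) - \phi(m\op b, a\op b) - \phi(m,b) + \phi(m,a).
\]
The plan is to rewrite $\BWsh_\w(\D,\C,\Csh)$ as a sum indexed not by the crossings of $\D$ but by \emph{arc-ends}, namely incidences of an arc of $\D$ with one of the crossings on its boundary (an arc passing through a crossing as the over-arc contributes both an incoming and an outgoing arc-end there). Once this is done, I would show that along each individual arc the corresponding contributions telescope to zero.

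For the first step, I would match, at a positive crossing with source region color $m$, under-arc color $a$ and over-arc color $b$, the four summands of $\w(m,a,b)$ with the four arc-ends of the crossing. By the shadow coloring rule of Figure~\ref{pic:Colorings}\rcircled{B}, the four regions around the crossing carry the colors $m$, $m\op a$, $m\op b$, and $(m\op a)\op b = (m\op b)\op(a\op b)$. A direct inspection shows that each term $\pm\phi(\cdot,\cdot)$ of the displayed expression equals $\varepsilon\,\phi(R,c)$, where $R$ is the region lying immediately to the right (with respect to the arc's orientation) of one of the four arc-ends of the crossing, $c$ is that arc-end's $\Q$-color, and $\varepsilon$ equals $+1$ if the arc-end enters the crossing and $-1$ if it leaves. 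The analogous analysis at a negative crossing, combined with the overall minus sign attached to negative crossings in~\eqref{E:TotalWeightSh}, produces exactly the same pattern. Hence
\[
\BWsh_\w(\D,\C,\Csh) = \sum_{\text{arc-ends}} \varepsilon\,\phi(R,c).
\]

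For the second step, fix an arc $A$ of $\D$ with $\Q$-color $a$. Following $A$ in its orientation, one meets its starting under-crossing $c_0$ (where $A$ is the outgoing under-arc), a sequence $c_1,\ldots,c_k$ of over-crossings (where $A$ is the over-arc), and finally its ending under-crossing $c_{k+1}$ (where $A$ is the incoming under-arc). Between consecutive crossings on $A$, the region to the right of $A$ is constant; denote its color by $R_i$ on the segment just after $c_i$, for $i=0,\ldots,k$. Then $A$ contributes $-\phi(R_0,a)$ at $c_0$, $\phi(R_{i-1},a)-\phi(R_i,a)$ at each interior $c_i$, and $+\phi(R_k,a)$ at $c_{k+1}$; the result telescopes to $0$. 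Summing over all arcs of $\D$ gives $\BWsh_\w(\D,\C,\Csh)=0$.

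The main obstacle is the first step: matching the four terms of $\w(m,a,b)$ with the four arc-ends at each crossing requires careful bookkeeping of orientations and of the signs attached to incoming versus outgoing arc-ends, carried out uniformly at positive and negative crossings. Once this identification has been made, the telescoping along each arc is immediate.
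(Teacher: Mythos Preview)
Your proof is correct and is essentially the paper's own argument, rephrased. The paper tracks the quantity $\nu(\gamma)=\theta(\Csh(\Delta(\gamma)),\C(\gamma))$ (with $\Delta(\gamma)$ the region to the right of the current arc~$\gamma$) while walking along each link component, observing that the total increment around a closed loop is zero and that the increments incurred at the two passes through a crossing sum to that crossing's contribution to $\BWsh_{d\theta}$; your arc-end formulation is precisely this telescoping, carried out arc by arc rather than component by component.
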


The proof we present is well adapted for higher-dimensional generalizations.

\begin{proof}
Take a $\Q$-module $\M$, a map $\theta\colon\M \times \Q \to A$, and a $(\Q,\M)$-colored knot diagram $(\D,\C,\Csh)$. Choose a point~$p$ on the knot, and examine the value $\nu(\gamma) = \theta(\Csh(\Delta(\gamma)),\C(\gamma))$ as you travel along the knot, following its direction; here $\gamma$ is the arc you are on, and $\Delta(\gamma)$ is the region to your right. You pass twice through any crossing~$\x$, and the sum of the increments of~$\nu(\gamma)$ during these two passages is exactly the contribution of~$\x$ to $\BWsh_{d \theta}(\D,\C)$ (for a positive~$\x$, it is shown on Figure~\ref{pic:Proof}; for a negative one things are similar). Returning to~$p$, you recover the original value $\nu(\gamma)$. Considering such walks along each link component, one concludes that the total increment of~$\nu(\gamma)$ --- which is precisely $\BWsh_{d \theta}(\D,\C)$ --- is zero.
\end{proof}

\begin{center}
\psset{unit=1.5mm}
\begin{pspicture}(-10,-2)(75,12)
\psline[linewidth=0.3,arrowsize=1.2,linecolor=MyRed]{->}(0,10)(10,0)
\psline[linewidth=0.3,border=1.4,arrowsize=1.2,linecolor=MyBlue]{->}(10,10)(0,0)
\rput[b](0,11){$\color{MyRed} a$}
\rput[b](10,11){$\color{MyBlue} b$}
\rput[b](-1,-3){$\color{MyBlue} b$}
\rput[b](12,-3){$\color{MyRed} a \lhd b$}
\rput(-3,5){$\scriptstyle\underline{m}$}
\rput(5,10){$\scriptstyle \underline{m \op a}$}
\rput(5,0){$\scriptstyle \underline{m \op b}$}
\rput(15,5){$\scriptstyle \underline{(m \op a) \op b}$}
\rput(50,10){$\color{MyRed} (\theta(m \op b, a \op b) - \theta (m,a))+ $}
\rput(45,5){$\longmapsto\qquad \color{MyBlue} (\theta(m,b) - \theta (m \op a,b)) $}
\rput(50,0){$= d\theta (m,a,b) $}
\end{pspicture}
\psset{unit=1mm}
\captionof{figure}{The total cost of passing through a crossing}\label{pic:Proof}
\end{center}

As a consequence, one obtains

\begin{proposition}
Cohomologous twisted quandle $2$-cocycles yield the same weights and identical twisted quandle cocycle invariants.
\end{proposition}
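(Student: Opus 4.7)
The plan is to bootstrap the claim from Lemma~\ref{L:ShadowCoboundary} via the shadow translation of Section~\ref{S:TwistedShadow}. Suppose $\w - \w' = d\theta$ for a twisted $1$-cochain $\theta \colon \Q \to A$; unfolding the definition $d = d_{\opp} - \alpha\, d_{\opr}$ on a $1$-cochain (with trivial module argument suppressed) yields $(d\theta)(a,b) = \alpha\theta(a) + (1-\alpha)\theta(b) - \theta(a \op b)$. Combining Lemma~\ref{L:WeightsUnitaryShadow} with the manifest linearity of the shadow weight in the cocycle (formula~\eqref{E:TotalWeightSh}), I obtain for every $\Q$-colored diagram $(\D, \C)$
\begin{align*}
\BWtw_{\w}(\D, \C, \alpha) - \BWtw_{\w'}(\D, \C, \alpha) = \BWsh_{\eta}(\D, \C, \C^{ind}),
\end{align*}
where $\eta(m,a,b) := \alpha^{-m}(d\theta)(a,b)$ is the image of $d\theta$ under the cocycle translation of Lemma~\ref{L:CocyclesUnitaryShadow}.

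The key step is to exhibit $\eta$ as a shadow $2$-coboundary, for then Lemma~\ref{L:ShadowCoboundary} will force the displayed difference to vanish. I would define a shadow $1$-cochain $\psi \colon \ZZ \times \Q \to A$ by $\psi(m,a) := \alpha^{1-m}\theta(a)$ --- the same exponential twist as for cocycles, but shifted by one. A direct computation of its shadow differential in the $\Q$-module $\ZZ$ (where $m \op a = m+1$) gives
\begin{align*}
(d\psi)(m,a,b) &= \psi(m+1, b) - \psi(m+1, a \op b) - \psi(m,b) + \psi(m,a) \\
&= \alpha^{-m}\bigl(\theta(b) - \theta(a \op b) - \alpha\theta(b) + \alpha\theta(a)\bigr) = \eta(m,a,b),
\end{align*}
so $\eta = d\psi$, as required.

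I anticipate no serious obstacle beyond this computation. The only mildly non-obvious ingredient is the exponent shift from $\alpha^{-m}$ (in the cocycle dictionary of Lemma~\ref{L:CocyclesUnitaryShadow}) to $\alpha^{1-m}$ for $1$-cochains; this offset is dictated by the asymmetry between the slot $m \op a$ and the plain $m$ in the shadow differential~\eqref{E:RackCohomL}--\eqref{E:RackCohomR}, whose contributions carry $\alpha$-exponents differing by exactly one. The identification of invariants is then immediate, since weights agree coloring-by-coloring and the invariants are the underlying multisets.
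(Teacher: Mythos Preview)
Your proof is correct and follows essentially the same route as the paper. The paper states the identity $(d_{\alpha}\theta)_{\alpha} = \alpha\, d(\theta_{\alpha})$ with $\theta_{\alpha}(m,a)=\alpha^{-m}\theta(a)$ and then invokes Lemma~\ref{L:ShadowCoboundary}; your $\psi(m,a)=\alpha^{1-m}\theta(a)=\alpha\,\theta_{\alpha}(m,a)$ simply absorbs the scalar $\alpha$ into the $1$-cochain, so that $d\psi=\eta$ without a leftover factor --- the same computation in slightly different packaging.
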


In particular, one can talk about the weight $\BWtw_{[\w]}$ associated to the twisted quandle $2$-cohomology class of~$\w$.

\begin{proof}
To simplify notations, put $d = d^1_{\opp} - d^1_{\opr}$, $d_{\alpha} = d^1_{\opp} - \alpha d^1_{\opr}$. By the linearity of weights, it suffices to check the triviality of twisted $d_{\alpha}\theta$-weights, with $\theta\colon\Q \to A$. An easy direct verification gives $(d_{\alpha} \theta)_{\alpha} = \alpha d (\theta_{\alpha})$ (we use notation~\eqref{E:AlphaWeight} and its analogue $\theta_{\alpha}(m,a) = \alpha^{-m} \theta (a)$). Lemma~\ref{L:WeightsUnitaryShadow} then implies that
\begin{align*}
\BWtw_{d_{\alpha}\theta}(\D,\C) &= \BWsh_{(d_{\alpha} \theta)_{\alpha}}(\D,\C,\C^{ind}) =  \BWsh_{\alpha d \theta_{\alpha}}(\D,\C,\C^{ind}) = \alpha  \BWsh_{d \theta_{\alpha}}(\D,\C,\C^{ind}),
\end{align*}
which vanishes according to Lemma~\ref{L:ShadowCoboundary}.
\end{proof}

As a second application, we explore an action of a quandle~$\Q$ on $(\Q,\M)$-colorings and its effect on shadow and twisted quandle cocycle invariants.  

\begin{lemma}\label{L:ShadowAction}
The set of $(\Q,\M)$-colorings of a knot diagram~$\D$ has the following $\Q$-module structure: the coloring $(\C,\Csh) \op c$ is obtained from $(\C,\Csh)$ by replacing each employed color~$x$ with $x \op c$. Moreover, for any $\M$-shadow quandle $2$-cocycle~$\w$, the $\w$-weight respects this $\Q$-module structure --- that is, for all $c \in \Q$, one has
\begin{align}\label{E:ShadowAction}
\BWsh_{\w}(\D,\C,\Csh) &= \BWsh_{\w}(\D,(\C,\Csh) \op c).
\end{align}
\end{lemma}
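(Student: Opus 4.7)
The proof splits naturally into two parts. For the first assertion, I would check directly that shifting every arc and region color by $\op c$ preserves the compatibility conditions defining a $(\Q,\M)$-coloring. This is an immediate consequence of self-distributivity~\eqref{E:SD}: at a crossing with inputs $a, b$ and under-output $a \op b$, after the shift one has inputs $a \op c, b \op c$ and under-output $(a \op b) \op c = (a \op c) \op (b \op c)$; the same identity handles the region-coloring rule of Figure~\ref{pic:Colorings}\rcircled{B}. The $\Q$-module axioms for the induced action on colorings then follow pointwise from those on $\Q$ and $\M$.

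For the second assertion, I would introduce the shifted map
\[
\w_c\colon \M \times \Q \times \Q \to A, \qquad \w_c(m,a,b) = \w(m \op c,\, a \op c,\, b \op c).
\]
Direct substitution in~\eqref{E:TotalWeightSh} shows $\BWsh_{\w}(\D, (\C,\Csh) \op c) = \BWsh_{\w_c}(\D, \C, \Csh)$, so it suffices to prove $\BWsh_{\w_c}(\D,\C,\Csh) = \BWsh_{\w}(\D,\C,\Csh)$. The plan is to exhibit $\w_c - \w$ as a shadow quandle $2$-coboundary and then invoke Lemma~\ref{L:ShadowCoboundary}.

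The natural candidate primitive is the $1$-cochain $\theta_c\colon \M \times \Q \to A$, $\theta_c(m,a) = \w(m,a,c)$. Rearranging the shadow cocycle identity~\eqref{E:BWR3sh} for $\w$ --- moving the term $\w(m \op c, a \op c, b \op c)$ to the left-hand side --- should yield the key identity
\[
\w_c(m,a,b) - \w(m,a,b) = -(d^1_{\opp} - d^1_{\opr})\theta_c(m,a,b).
\]
The resulting coboundary automatically vanishes on $(m,a,a)$ since $a \op a = a$, so it lies in the quandle subcomplex, and Lemma~\ref{L:ShadowCoboundary} gives $\BWsh_{\w_c - \w}(\D,\C,\Csh) = 0$, finishing the proof. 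The main conceptual step is the guess $\theta_c(m,a) = \w(m,a,c)$; once it is made, verifying the coboundary identity reduces to a mechanical expansion of~\eqref{E:BWR3sh}, and the first assertion is pure self-distributivity bookkeeping.
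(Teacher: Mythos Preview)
Your proposal is correct and follows essentially the same approach as the paper: the first assertion is handled via self-distributivity, and for the second you exhibit $\w(m\op c,a\op c,b\op c)-\w(m,a,b)$ as $\pm d\theta_c$ with $\theta_c(m,a)=\w(m,a,c)$ (the paper calls this $1$-cochain $\w_c$) and then invoke Lemma~\ref{L:ShadowCoboundary}. The only cosmetic difference is that you introduce the shifted $2$-cocycle $\w_c$ as an explicit intermediate object, whereas the paper writes the difference of weights directly as $\BWsh_{d\theta_c}$.
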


\begin{proof}
We first check that $(\C,\Csh) \op c$ is indeed a $(\Q,\M)$-coloring. For this, one should show that the coloring rules from Figure~\ref{pic:Colorings} remain valid when each color~$x$ is replaced with $x \op c$; this follows from Axiom~\eqref{E:SD} for~$\Q$ and for~$\M$. This axiom also implies that one actually gets a $\Q$-module structure.

To show relation~\eqref{E:ShadowAction}, rewrite the defining property~\eqref{E:BWR3sh} for~$\w$ as 
\begin{gather*}
\w(m, a, b) - \w(m \op c, a \op c, b \op c) =\\
(\w(m \op a, b, c) - \w(m \op b, a \op b, c)) - (\w(m, b, c) - \w(m, a, c)).
\end{gather*}
The second line can be interpreted as $d \w_c(m,a,b)$, where $\w_c \in C^1(\M,\Q,A)$ is defined by $\w_c(m,a) = \w(m, a, c)$, and $d = d^1_{\opp} - d^1_{\opr}$. Consequently, one has
\begin{align*}
\BWsh_{\w}(\D,\C,\Csh) - \BWsh_{\w}(\D,(\C,\Csh) \op c) &= \BWsh_{d \w_c}(\D,\C,\Csh),
\end{align*}
which vanishes according to Lemma~\ref{L:ShadowCoboundary}.
\end{proof}

This lemma leads to a better understanding of the dependence of shadow invariants from Theorem~\ref{T:ShadowInv} on the choice of the exterior region color $m \in \M$. We will use the notion of \emph{orbits} of a $\Q$-module $\M$, which are classes for the equivalence relation on~$M$ induced by $m \sim m \op a$, $a \in \Q$.\footnote{Orbits can also be thought of as the \emph{irreducible components} of~$\M$. Every $\Q$-module uniquely decomposes as a disjoint union of irreducible $\Q$-modules.} 

\begin{proposition}\label{P:ExtColor}
For any $\Q$-module~$\M$, elements~$m_1$ and~$m_2$ lying in the same orbit of~$\M$, and $\M$-shadow quandle $2$-cocycle $\w$ of~$\Q$, the multisets 
\begin{align*}
&\{\,\BWsh_{\w}(\D,\C,\Csh) \,|\,(\C,\Csh) \in \Colsh_{\Q,\M}(\D),\, \Csh(\Delta_{ex}(\D))=m_1\,\}\\
\text{and} \qquad &\{\,\BWsh_{\w}(\D,\C,\Csh) \,|\,(\C,\Csh) \in \Colsh_{\Q,\M}(\D),\, \Csh(\Delta_{ex}(\D))=m_2\,\}
\end{align*}
defining shadow invariants coincide.
\end{proposition}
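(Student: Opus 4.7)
The plan is to reduce the statement to a one-step version and then apply Lemma~\ref{L:ShadowAction}. Since $m_1$ and $m_2$ lie in the same orbit, by definition there is a finite chain $m_1 = n_0, n_1, \ldots, n_k = m_2$ with each consecutive pair related by $n_{i+1} = n_i \op c_i$ or $n_{i+1} = n_i \wop c_i$ for some $c_i \in \Q$. By Axiom~\eqref{E:Inv}, writing $n_i = n_{i+1} \op c_i$ converts the second kind of step into the first with the roles swapped, so by symmetry it is enough to prove the proposition under the assumption $m_2 = m_1 \op c$ for some $c \in \Q$; the general case then follows by a straightforward induction on $k$.

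In this reduced situation, consider the map sending a $(\Q,\M)$-coloring $(\C,\Csh)$ with $\Csh(\Delta_{ex}(\D)) = m_1$ to its image $(\C,\Csh) \op c$ under the $\Q$-action supplied by Lemma~\ref{L:ShadowAction}. By construction the new coloring is a valid $(\Q,\M)$-coloring, and the color of the exterior region becomes $m_1 \op c = m_2$; so this map takes the first multiset's indexing set into the second's. Its inverse is the analogous operation involving $\wop c$, which is well-defined thanks to Axiom~\eqref{E:Inv} and is again a $(\Q,\M)$-coloring by the same argument (applied to the module operation $\wop$, which satisfies self-distributivity as a consequence of the $\Q$-module axioms). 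Hence we have a bijection between the two index sets.

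Finally, relation~\eqref{E:ShadowAction} in Lemma~\ref{L:ShadowAction} asserts $\BWsh_{\w}(\D,\C,\Csh) = \BWsh_{\w}(\D,(\C,\Csh)\op c)$, so this bijection preserves $\w$-weights term by term. Therefore the two multisets are equal, completing the induction step and the proof.

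The only real obstacle is making sure the $\Q$-action on colorings genuinely restricts to a bijection between the two sub-collections cut out by the exterior-region constraints; but this is immediate from the fact that $(\cdot) \op c$ and $(\cdot) \wop c$ are mutually inverse bijections on $\M$ (again by Axiom~\eqref{E:Inv}). Everything else reduces to an invocation of Lemma~\ref{L:ShadowAction}.
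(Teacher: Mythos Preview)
Your proof is correct and follows essentially the same route as the paper: reduce to the case $m_2 = m_1 \op c$, use the $\Q$-action from Lemma~\ref{L:ShadowAction} to get a bijection between the two sets of colorings, and invoke relation~\eqref{E:ShadowAction} to see that weights are preserved. You spell out the reduction and the bijectivity more carefully than the paper does, but the argument is the same.
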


\begin{proof}
It suffices to check the assertion for $m_2 = m_1 \op c$. In this case, the map $(\C,\Csh) \mapsto (\C,\Csh) \op c$ establishes a bijection between the $(\Q,\M)$-colorings used for defining the two multisets. Lemma~\ref{L:ShadowAction} tells that the $\w$-weights of the corresponding colorings are the same, so the multisets coincide.
\end{proof}

Returning to the twisted setting, one obtains several useful consequences. 

\begin{proposition}\label{P:TwistedScale}
An $\alpha$-twisted quandle cocycle invariant does not change if all its elements are scaled by~$\alpha$.
\end{proposition}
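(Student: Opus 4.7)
The plan is to leverage the shadow interpretation of twisted invariants from Theorem~\ref{T:Shadow} together with the orbit-invariance of shadow invariants from Proposition~\ref{P:ExtColor}. The key observation is that the $\Q$-module $\ZZ$ with $m \op a = m+1$ has a single orbit: every integer is reachable from every other via $\op$ and $\wop$. So Proposition~\ref{P:ExtColor} lets us freely replace the exterior region color $0$ (used in Theorem~\ref{T:Shadow}) by any other integer without changing the shadow invariant multiset.

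Next, I would compute how the shadow $\wsh$-weight depends on the exterior color. If the exterior region is colored by $m_0 \in \ZZ$, then a straightforward induction on paths (analogous to Lemma~\ref{L:IndColoring}) gives that each region $\Delta$ receives color $m_0 + \ind(\Delta)$, and in particular the source region of each crossing~$\x$ is colored by $m_0 + \ind(\x)$. Using the definition $\wsh(m,a,b) = \alpha^{-m}\w(a,b)$ and summing over crossings as in Lemma~\ref{L:WeightsUnitaryShadow}, one gets
\begin{align*}
\BWsh_{\wsh}(\D,\C,\Csh_{m_0}) \;=\; \alpha^{-m_0}\,\BWtw_{\w}(\D,\C,\alpha),
\end{align*}
where $\Csh_{m_0}$ is the (unique) region coloring with $\Csh_{m_0}(\Delta_{ex}(\D)) = m_0$.

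The proposition then follows by comparing the two choices $m_0 = 0$ and $m_0 = -1$. By Proposition~\ref{P:ExtColor} the multisets $\{\BWsh_{\wsh}(\D,\C,\Csh_{0})\}_\C$ and $\{\BWsh_{\wsh}(\D,\C,\Csh_{-1})\}_\C$ coincide; by the formula above the first equals $\{\BWtw_{\w}(\D,\C,\alpha)\}_\C$ and the second equals $\{\alpha\,\BWtw_{\w}(\D,\C,\alpha)\}_\C$, which is exactly the claim.

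The main (and essentially only) subtle point is ensuring that the $\Q$-module $\ZZ$ really does form a single orbit, so that Proposition~\ref{P:ExtColor} applies. This is immediate from $m \op a = m+1$ (and its inverse $m \wop a = m-1$), but it is worth stating explicitly since the whole argument hinges on it. Everything else is a direct computation of the rescaling factor $\alpha^{-m_0}$ coming out of the weight formula.
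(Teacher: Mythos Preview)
Your proof is correct and follows essentially the same approach as the paper: invoke Theorem~\ref{T:Shadow} to pass to the shadow picture, use that the $\Q$-module $\ZZ$ has a single orbit so Proposition~\ref{P:ExtColor} allows the exterior color to be shifted from $0$ to $-1$, and observe via the definition of~$\wsh$ that this shift scales every weight by~$\alpha$. Your write-up is slightly more explicit in recording the general formula $\BWsh_{\wsh}(\D,\C,\Csh_{m_0}) = \alpha^{-m_0}\BWtw_{\w}(\D,\C,\alpha)$, but the argument is the same.
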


\begin{proof}
The invariants in question are multisets $\{\,\BWtw_{\w}(\D,\C,\alpha) \,|\, \C \in \Col_{\Q}(\D)\,\}$ for $\alpha$-twisted quandle $2$-cocycles $\w$ of~$\Q$. Theorem~\ref{T:Shadow} identifies such a multiset with
\begin{align*}
&\{\,\BWsh_{\wsh}(\D,\C,\Csh) \,|\, (\C,\Csh) \in \Colsh_{\Q,\ZZ}(\D),\, \Csh(\Delta_{ex}(\D)) = 0\,\}.
\end{align*}
According to Proposition~\ref{P:ExtColor}, this multiset does not change if the color $\Csh(\Delta_{ex}(\D))$ of the exterior region of~$\D$ is imposed to be $-1$ instead of $0$, since the $\Q$-module $\ZZ$ consists of a single orbit. With this new condition, all region colors in $\Csh$ should be reduced by $1$. Recalling the definition~\eqref{E:AlphaWeight} of the shadow quandle 2-cocycle
$\wsh$, one sees that this corresponds to multiplying all the shadow $\wsh$-weights by~$\alpha$. Thus such a scaling does not change our invariant.
\end{proof}

This proposition can be used to show that certain twisted quandle cocycle invariants are no stronger than usual quandle invariants. For instance, for a finite quandle~$\Q$ and $A = \ZZ[t^{\pm 1}]$, $t$-twisted quandle cocycle invariants contain only zeros, since a non-zero weight would imply, by scaling by~$t$, an infinity of pairwise distinct weights, whereas the total number of $\Q$-colorings is finite. 

\begin{proposition}\label{P:TwistedScale2}
Suppose that for $c_1,\ldots, c_k \in \Q$ and $\varepsilon_1,\ldots,\varepsilon_k \in \{\pm 1\}$, relation
\begin{align}\label{E:RelInQ}
(\cdots (a \op^{\varepsilon_1} c_1) \op^{\varepsilon_2} \cdots ) \op^{\varepsilon_k} c_k &=a
\end{align}
holds for all $a \in \Q$; we used notations $\op^{+1} = \op$, $\op^{-1} = \wop$. Then the multiplication by $\alpha^{\sum_{i=1}^k \varepsilon_i}-1$ annihilates all $\alpha$-twisted weights.
\end{proposition}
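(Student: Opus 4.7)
The plan is to exploit Theorem~\ref{T:Shadow} together with the action from Lemma~\ref{L:ShadowAction}, applied to the $\Q$-module $\ZZ$ with $m \op a = m+1$. Fix a $\Q$-coloring $\C$ of a diagram~$\D$ and extend it to the shadow coloring $(\C, \C^{ind})$ whose exterior region carries color~$0$. I would then act on $(\C, \C^{ind})$ by the iterated operation $\op^{\varepsilon_1} c_1, \ldots, \op^{\varepsilon_k} c_k$, applied simultaneously to every arc color and every region color, and track the effect on the shadow $\wsh$-weight.

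By hypothesis~\eqref{E:RelInQ}, after all $k$ steps every arc color $a \in \Q$ returns to itself. On the region side, the operations $\op$ and $\wop$ on the $\Q$-module $\ZZ$ are $m \mapsto m+1$ and $m \mapsto m-1$, so every region color $m$ is shifted by $N := \sum_{i=1}^k \varepsilon_i$. Each intermediate configuration is a genuine $(\Q,\ZZ)$-coloring whose shadow $\wsh$-weight equals that of $(\C, \C^{ind})$, by Lemma~\ref{L:ShadowAction} for $\varepsilon_i = +1$ and by its inverse for $\varepsilon_i = -1$. Hence the shadow coloring obtained at the end, namely $(\C, \C^{ind} + N)$ --- a legitimate $(\Q,\ZZ)$-coloring with exterior region colored~$N$ --- satisfies $\BWsh_{\wsh}(\D, \C, \C^{ind} + N) = \BWsh_{\wsh}(\D, \C, \C^{ind})$. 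On the other hand, the defining formula $\wsh(m,a,b) = \alpha^{-m}\w(a,b)$ implies that shifting every region color by~$N$ multiplies the total shadow weight by~$\alpha^{-N}$. Combining these two facts with Lemma~\ref{L:WeightsUnitaryShadow}, which identifies $\BWsh_{\wsh}(\D, \C, \C^{ind})$ with $\BWtw_{\w}(\D, \C, \alpha)$, gives $\BWtw_{\w}(\D, \C, \alpha) = \alpha^{-N}\BWtw_{\w}(\D, \C, \alpha)$; multiplying through by $\alpha^{N}$ yields the required identity $(\alpha^{N} - 1)\BWtw_{\w}(\D, \C, \alpha) = 0$.

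The only subtle point I foresee is checking that the action by $\wop c$ is a legitimate, weight-preserving transformation of $(\Q,\ZZ)$-colorings, since Lemma~\ref{L:ShadowAction} is stated only for~$\op c$. This is not truly an obstacle: axiom~\eqref{E:Inv} makes $\op c$ and $\wop c$ mutually inverse bijections of both~$\Q$ and~$\ZZ$, so acting by $\wop c$ on a coloring is exactly the inverse of acting by $\op c$, and weight-preservation by the latter automatically transfers to the former.
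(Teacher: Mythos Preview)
Your argument is correct and follows essentially the same route as the paper: pass to shadow weights via Lemma~\ref{L:WeightsUnitaryShadow}, apply Lemma~\ref{L:ShadowAction} iteratively to see that acting by the sequence $\op^{\varepsilon_1} c_1,\ldots,\op^{\varepsilon_k} c_k$ fixes the arc coloring while shifting all region colors by $N=\sum_i\varepsilon_i$, and then read off the factor $\alpha^{-N}$ from the definition of~$\wsh$. Your explicit remark that the $\wop c$ case follows by inverting the bijection $\op c$ is a point the paper leaves implicit; otherwise the proofs coincide.
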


\begin{proof}
As usual, Lemma~\ref{L:WeightsUnitaryShadow} allows us to work with shadow weights instead of $\alpha$-twisted weights. A repeated application of Lemma~\ref{L:ShadowAction} gives
\begin{align*}
\BWsh_{\wsh}(\D,\C,\C^{ind}) &= \BWsh_{\wsh}(\D,(\cdots ((\C,\C^{ind}) \op^{\varepsilon_1} c_1) \op^{\varepsilon_2} \cdots ) \op^{\varepsilon_k} c_k).
\end{align*}
Relation~\eqref{E:RelInQ} implies $(\cdots ((\C,\C^{ind}) \op^{\varepsilon_1} c_1) \op^{\varepsilon_2} \cdots ) \op^{\varepsilon_k} c_k = (\C,\C^{ind} + \sum_{i=1}^k \varepsilon_i)$, where in the latter region coloring each color is increased by $\sum_{i=1}^k \varepsilon_i$. Repeating the argument from the proof of Proposition~\ref{P:TwistedScale}, one obtains
\begin{align*}
\textstyle \BWsh_{\wsh}(\D,\C,\C^{ind} + \sum_{i=1}^k \varepsilon_i) &= \alpha^{-\sum_{i=1}^k \varepsilon_i}\BWsh_{\wsh}(\D,\C,\C^{ind}).
\end{align*}
Now, combine everything to get $\BWtw_{\w}(\D,\C,\alpha) = \alpha^{-\sum_{i=1}^k \varepsilon_i}\BWtw_{\w}(\D,\C,\alpha)$.
\end{proof}

For example, if $\Q$ contains a ``central'' element~$c$ satisfying $a \op c = a$ for all $a \in \Q$, then scaling by $\alpha - 1$ kills all $\alpha$-twisted weights. 

The third application is a shadow enhancement of twisted quandle cocycle invariants. To treat simultaneously the $\Q$-module~$\M$ we want to color regions with and the $\Q$-module $\ZZ$ (with $m \op a = m+1$) used in our shadow interpretation of twisted invariants, we use the following elementary observation:

\begin{lemma}\label{L:ProductQMod}
The direct product $\M \times \M'$ of $\Q$-modules $\M$ and $\M'$ is also a $\Q$-module, with the diagonal action $(m,m') \op a = (m \op a, m' \op a)$. 
\end{lemma}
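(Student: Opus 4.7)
The claim is a routine componentwise verification, so the proof sketch is short. First I would make the definition of the inverse operation $\wop$ on $\M \times \M'$ explicit, setting $(m,m') \wop a = (m \wop a, m' \wop a)$; together with the stated $\op$, these are the only natural diagonal candidates.

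Next, I would check the two module axioms for $(\M \times \M', \op, \wop)$ over $\Q$. For the self-distributivity axiom~\eqref{E:SD}, given $a,b,c \in \Q$ and $(m,m') \in \M \times \M'$, unfold both sides of $((m,m') \op b) \op c = ((m,m') \op c) \op (b \op c)$ coordinate by coordinate; on the first coordinate this becomes $(m \op b) \op c = (m \op c) \op (b \op c)$, which holds by~\eqref{E:SD} for the $\Q$-module $\M$, and analogously on the second coordinate using~\eqref{E:SD} for $\M'$. For the invertibility axiom~\eqref{E:Inv}, unfold $((m,m') \op a) \wop a$ and $((m,m') \wop a) \op a$ coordinatewise; each reduces to~\eqref{E:Inv} for $\M$ (resp.\ $\M'$), yielding $(m,m')$ in both cases.

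There is essentially no obstacle: the proof is a one-line coordinatewise application of the fact that $\M$ and $\M'$ are already $\Q$-modules. I would conclude by stating that this construction is functorial (the obvious projections are $\Q$-module maps) and, although it is not needed for the lemma itself, noting for use in the sequel that if the exterior region of a diagram is colored by $(m,m') \in \M \times \M'$, then the two coordinates of the induced region coloring coincide with the region colorings induced by $m$ and $m'$ separately, which is the observation that will let us combine an $\M$-shadow coloring with the index coloring $\C^{ind}$ in one stroke.
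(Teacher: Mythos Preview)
Your proposal is correct; the verification is exactly the routine coordinatewise check you describe. The paper treats this lemma as an ``elementary observation'' and gives no proof at all, so your write-up is already more detailed than what the paper provides. Your closing remark about how the two coordinates of the region coloring decouple is precisely the point used afterwards to combine an $\M$-shadow coloring with~$\C^{ind}$.
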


Now, repeating verbatim all the arguments from the previous and the beginning of the current sections for the $\Q$-module $\M \times \ZZ$ instead of~$\ZZ$, one gets

\begin{theorem}\label{T:ShadowTwisted}
Take a $\Q$-module $\M$, a fixed $m \in \M$, and an $\alpha$-twisted $\M$-shadow quandle $2$-cocycle~$\w$. The multiset of \emph{$\alpha$-twisted $\M$-shadow $\w$-weights} 
\psset{unit=0.4mm}
\begin{align*}
&\{\,\BWtwsh_{\w}(\D,\C,\Csh,\alpha) \,|\, (\C,\Csh)\in \Colsh_{\Q,\M}(\D),\,\Csh(\Delta_{ex}(\D))=m \,\}, \\
\text{where} & \qquad\BWtwsh_{\w}(\D,\C,\Csh,\alpha) = \sum_{\begin{pspicture}(13,10)(-3,0)
\psline(0,10)(10,0)
\psline[border=0.8,arrowsize=3]{->}(10,10)(0,0)
\rput(-2,10){$\scriptstyle a$}
\rput(12,10){$\scriptstyle b$}
\end{pspicture}} \sgn(\x) \alpha^{-\ind(\x)} \w(\Csh(\Delta(\x)), a, b),
\end{align*}
\psset{unit=1mm}
is a knot invariant\footnote{For notations, refer to Definition~\ref{D:IndexRegion}.}.  Moreover, cohomologous $2$-cocycles yield identical invariants.
\end{theorem}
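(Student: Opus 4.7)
The plan is to mimic, almost verbatim, the strategy developed in Sections~\ref{S:TwistedShadow} and~\ref{S:Applications}, replacing the $\Q$-module $\ZZ$ by the product $\Q$-module $\M \times \ZZ$ provided by Lemma~\ref{L:ProductQMod}. The $\ZZ$-factor continues to track the region index, while the $\M$-factor carries the genuine shadow coloring; the diagonal action ensures that the two factors evolve consistently as one crosses an arc.

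First I would build, from an $\alpha$-twisted $\M$-shadow quandle $2$-cocycle $\w \colon \M \times \Q \times \Q \to A$, the candidate shadow $2$-cocycle
\begin{align*}
\wsh \colon (\M \times \ZZ) \times \Q \times \Q &\longrightarrow A,\\
((m,n),a,b) &\longmapsto \alpha^{-n} \w(m,a,b),
\end{align*}
and verify (by the same $\alpha$-cancellation as in Lemma~\ref{L:CocyclesUnitaryShadow}, now with the extra~$\M$-entry carried along unchanged) that $\wsh$ is an $(\M \times \ZZ)$-shadow quandle $2$-cocycle in the sense of~\eqref{E:BWR3sh}--\eqref{E:BWR1sh}. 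Next I would match up colorings: given $(\C,\Csh) \in \Colsh_{\Q,\M}(\D)$ with $\Csh(\Delta_{ex}(\D)) = m$, the assignment $\Delta \mapsto (\Csh(\Delta), \ind(\Delta))$ is a $(\Q, \M \times \ZZ)$-coloring with exterior value $(m,0)$; by Lemma~\ref{L:IndColoring} (adapted to the product $\Q$-module, where the $\M$-coordinate is free and the $\ZZ$-coordinate is forced to be the index), every $(\Q, \M \times \ZZ)$-coloring of~$\D$ with exterior color $(m,0)$ arises in this way from a unique $(\Q,\M)$-coloring. Under this bijection, the definitions of $\BWtwsh_{\w}$ and $\BWsh_{\wsh}$ agree term by term, exactly as in Lemma~\ref{L:WeightsUnitaryShadow}.

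With these pieces in place, knot invariance follows immediately by applying Theorem~\ref{T:ShadowInv} to the shadow cocycle~$\wsh$ over the $\Q$-module $\M \times \ZZ$ with exterior color $(m,0)$. For the cohomology statement, I would argue that a twisted $\M$-shadow coboundary $d_\alpha \theta$ (with $\theta \in C^1(\M, \Q, A)$) corresponds, under the above correspondence, to a genuine shadow coboundary for the product module: setting $\theta_\alpha((m,n),a) = \alpha^{-n}\theta(m,a)$ and $d = d^1_{\opp} - d^1_{\opr}$, a routine calculation analogous to the one in the proof of the cohomology proposition gives $(d_\alpha \theta)_\alpha = \alpha \, d(\theta_\alpha)$; Lemma~\ref{L:ShadowCoboundary}, applied in its evident $\M \times \ZZ$ form, then forces the corresponding weights to vanish, so cohomologous cocycles produce identical multisets.

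The one place that genuinely requires care, rather than pure transcription, is checking that Lemma~\ref{L:IndColoring} extends correctly: one must observe that fixing $\Csh(\Delta_{ex}(\D)) = (m,0)$ determines the $\ZZ$-component of every region as its index (since $m \op a = m+1$ is performed only on the second coordinate), while the $\M$-component is propagated precisely by the rule of Figure~\ref{pic:Colorings}\rcircled{B}, so the two factors decouple exactly as needed. Once this decoupling is noted, all subsequent steps are formal translations and no new calculation is required.
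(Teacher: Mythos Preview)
Your proposal is correct and follows exactly the approach the paper indicates: the paper's own proof consists of the single sentence ``repeating verbatim all the arguments from the previous and the beginning of the current sections for the $\Q$-module $\M \times \ZZ$ instead of~$\ZZ$,'' and you have faithfully unpacked precisely those arguments (Lemmas~\ref{L:IndColoring}--\ref{L:WeightsUnitaryShadow}, Theorem~\ref{T:ShadowInv}, Lemma~\ref{L:ShadowCoboundary}, and the coboundary identity $(d_\alpha\theta)_\alpha = \alpha\, d(\theta_\alpha)$) in the product setting. The decoupling observation you flag as the only subtle point is indeed the only new content, and it is handled correctly.
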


The last bonus from our shadow interpretation of twisted quandle cocycle invariants is their higher-dimensional generalization. Indeed, take a $\Q$-colored $k-1$-dimensional knot diagram $(\D,\C)$ in~$\RR^k$, and an $\alpha$-twisted quandle $k$-cocycle~$\w$. Define the $\alpha$-twisted $\w$-weight of $(\D,\C)$ by a formula analogous to~\eqref{E:TotalWeightMulti}, using higher-dimensional versions of indices and signs for crossings of multiplicity~$k$, and a relevant ordering of adjacent sheets. Lemmas \ref{L:IndColoring}-\ref{L:WeightsUnitaryShadow} 
generalize directly. 
The multiset of $\alpha$-twisted $\w$-weights is thus interpreted in terms of higher-dimensional shadow invariants (for more details about the latter, see for instance \cite{FR,CKS_Surfaces,PrzRos}). Moreover, this construction admits a shadow version, similar to that from Theorem~\ref{T:ShadowTwisted}. Hence any $\alpha$-twisted (possibly shadow) quandle $k$-cocycle 
gives rise to a $k-1$-dimensional knot invariant. 

\section{Positive quandle cocycle invariants via shadows}\label{S:PosShadow}

This section is devoted to the case $\alpha = -1$. For this~$\alpha$, the twisted quandle cohomology theory is precisely the positive quandle cohomology theory from~\cite{PosQuandleHom}. In order to show that positive and $-1$-twisted quandle cocycle invariants coincide, it remains to compare the weights used in the two constructions:

\begin{lemma}\label{L:WeightsPosTwisted} 
For any $\Q$-colored diagram $(\D,\C)$ and any positive quandle $2$-cocycle~$\w$, one has 
$\BWtw_{\w}(\D,\C,-1) = \BWpos_{\w}(\D,\C)$.
\end{lemma}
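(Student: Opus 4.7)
The plan is to reduce the global equality to a per-crossing identity and then verify the latter by matching the figures. Since both weight totals are sums over the crossings of~$\D$, with the common factor $\w(a,b)$ attached to each crossing, the lemma is equivalent to the scalar identity
\[
\sgn(\x)\,(-1)^{-\ind(\x)} \;=\; \sgnpos(\x)
\]
at every crossing~$\x$ of~$\D$.

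I first rewrite $(-1)^{-\ind(\x)}=(-1)^{\ind(\Delta(\x))}$. Since crossing any strand changes the index by $\pm1$, the parity of $\ind(\Delta)$ coincides with the checkerboard colour of~$\Delta$ used in Figure~\ref{pic:ColoringsPos}, with the exterior (of index~$0$) being white; under the convention white $=+1$, grey $=-1$, the identity to prove becomes
\[
\sgn(\x)\cdot\bigl[\text{checkerboard colour of }\Delta(\x)\bigr] \;=\; \sgnpos(\x).
\]

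Next I would use symmetries to shrink the case analysis. Reversing the orientation of one strand at~$\x$ flips $\sgn(\x)$ and moves $\Delta(\x)$ across that strand into an adjacent region, whose checkerboard colour is opposite; the product on the left is therefore orientation-invariant, which matches the orientation-free right-hand side. Swapping the over/under data flips both $\sgn(\x)$ and $\sgnpos(\x)$, as one sees by comparing the two pictures of Figures~\ref{pic:CocInvar} and~\ref{pic:ColoringsPos} respectively, while $\Delta(\x)$ and its colour stay put. These reductions leave two representative cases -- the source $\Delta(\x)$ being white or grey -- and each is checked by reading $\sgn(\x)$ from Figure~\ref{pic:CocInvar} and $\sgnpos(\x)$ from Figure~\ref{pic:ColoringsPos} (the latter applied after a $90^\circ$ rotation in the ``grey source'' case, in order to align the local shading with that of the figure).

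The only mildly delicate point is the coherent alignment of the three sign conventions, coming from Figures~\ref{pic:CocInvar}, \ref{pic:Index}\rcircled{B}, and~\ref{pic:ColoringsPos}. Once these are matched up, the verification is immediate, and no conceptual obstacle is expected.
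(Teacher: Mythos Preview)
Your proposal is correct and follows essentially the same route as the paper: reduce to the per-crossing identity $\sgn(\x)(-1)^{-\ind(\x)}=\sgnpos(\x)$, identify $\ind \bmod 2$ with the checkerboard colouring, and then kill the remaining cases by symmetry. The only cosmetic difference is in how the case analysis is cut down: the paper normalises by rotating so that the over-arc points south-west and then argues that flipping the under-arc orientation preserves the identity, whereas you invoke both orientation-reversal and over/under-swap invariance before checking the two shading cases. Both reductions land on the same two-case verification.
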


\begin{proof}
Is is sufficient to show that for any crossing~$\x$, the signs $\sgn(\x) (-1)^{-\ind(\x)}$ and $\sgnpos(\x)$ 
coincide. First, observe that, modulo $2$, the coloring~$\C^{ind}$ reduces to the checkerboard coloring, under the identification $0 = white$, $1 = black$. Using this identification, the definition of $\sgnpos(\x)$ (Figure~\ref{pic:ColoringsPos}) can be restated as on Figure~\ref{pic:ColoringsPos2} (note that we turned both diagrams so that the over-arc points south-west). Now, if the under-arc points south-east, then $\sgn(\x)$ is~$1$, and $\ind(\x)$ is the color of the region to the west of~$\x$; one checks that the latter satisfies $(-1)^{-\ind(\x)} = \sgnpos(\x)$, as desired. A change in the orientation of the under-arc changes $\sgn(\x)$ and $\ind(\x)$, but not $\sgnpos(\x)$, so equality $\sgn(\x) (-1)^{-\ind(\x)} = \sgnpos(\x)$ is preserved.
\end{proof}

\begin{center}
\begin{pspicture}(0,0)(40,10)
\psline[linewidth=0.6](0,10)(10,0)
\psline[linewidth=0.6,border=1.8,arrowsize=2]{->}(10,10)(0,0)
\rput(0,5){$0$}
\rput(10,5){$0$}
\rput(5,0){$1$}
\rput(5,10){$1$}
\rput(20,5){$\leadsto \color{MyBlue} \bm{+}$}
\end{pspicture}
\begin{pspicture}(0,0)(30,10)
\psline[linewidth=0.6](0,10)(10,0)
\psline[linewidth=0.6,border=1.8,arrowsize=2]{->}(10,10)(0,0)
\rput(0,5){$1$}
\rput(10,5){$1$}
\rput(5,0){$0$}
\rput(5,10){$0$}
\rput(20,5){$\leadsto \color{MyBlue} \bm{-}$}
\end{pspicture}
\captionof{figure}{Sign $\sgnpos$ via region indices}
\label{pic:ColoringsPos2}
\end{center}

Thus positive quandle cocycle invariants coincide with $-1$-twisted ones. Combining this with Theorem~\ref{T:Shadow}, one obtains
\begin{theorem}\label{T:Pos}
For a knot diagram~$\D$ and a positive quandle $2$-cocycle~$\w$, the multiset $\{\,\BWpos_{\w}(\D,\C) \,|\, \C \in \Col_{\Q}(\D)\,\}$ of positive $\w$-weights of~$\D$ coincides with the multiset
$$\{\,\BWsh_{\w_{-1}}(\D,\C,\Csh) \,|\, (\C,\Csh) \in \Colsh_{\Q,\ZZ_2}(\D),\, \Csh(\Delta_{ex}(\D)) = 0\,\}$$
of its $\ZZ_2$-shadow $\w_{-1}$-weights with the exterior region colored by~$0$.
\end{theorem}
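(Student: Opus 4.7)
The plan is to chain together the two immediately preceding results and then reconcile the coefficient module $\ZZ$ with $\ZZ_2$. Lemma~\ref{L:WeightsPosTwisted} equates the positive $\w$-weight with the $-1$-twisted $\w$-weight on every $\Q$-colored diagram, so the multisets $\{\,\BWpos_{\w}(\D,\C) \,|\, \C \in \Col_{\Q}(\D)\,\}$ and $\{\,\BWtw_{\w}(\D,\C,-1) \,|\, \C \in \Col_{\Q}(\D)\,\}$ coincide. Theorem~\ref{T:Shadow}, applied with $\alpha = -1$, further identifies the latter multiset with the $\ZZ$-shadow multiset built from the shadow cocycle $\wsh$ given by $\wsh(m,a,b) = (-1)^{-m}\w(a,b) = (-1)^m \w(a,b)$, the exterior region being colored by~$0$.

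The only remaining point is that one may safely replace the $\Q$-module $\ZZ$ by the $\Q$-module $\ZZ_2$. I would argue as follows. The action $m \op a = m+1$ descends from $\ZZ$ to $\ZZ_2$, making the reduction modulo~$2$ a surjective morphism of $\Q$-modules $\pi\colon \ZZ \twoheadrightarrow \ZZ_2$. The cocycle $\wsh$ depends on~$m$ only modulo~$2$ and therefore factors through~$\pi$, recovering precisely the cocycle $\w_{-1}\colon \ZZ_2 \times \Q \times \Q \to A$ appearing in the statement. Moreover, for either $\Q$-module the region coloring is uniquely determined by the arc coloring together with the color of~$\Delta_{ex}(\D)$, since the colors of any two adjacent regions differ by~$+1$ (cf. Figure~\ref{pic:Index}\rcircled{C}). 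Hence, with the exterior color fixed to~$0$, the projection $\pi$ induces a bijection between $(\Q,\ZZ)$-colorings and $(\Q,\ZZ_2)$-colorings that preserves the contribution at each crossing, and the two multisets coincide.

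There is essentially no technical obstacle to overcome: the argument is a direct composition of Lemma~\ref{L:WeightsPosTwisted}, Theorem~\ref{T:Shadow} at $\alpha=-1$, and the mod-$2$ reduction just described. The only point one should double-check carefully is that $\pi$ does yield a honest bijection on the relevant coloring sets once the exterior color is pinned down, but this is immediate from Lemma~\ref{L:IndColoring} together with the analogous determinacy statement for~$\ZZ_2$.
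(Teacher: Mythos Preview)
Your argument is correct and follows exactly the paper's approach: combine Lemma~\ref{L:WeightsPosTwisted} with Theorem~\ref{T:Shadow} at $\alpha=-1$, then pass from the $\Q$-module~$\ZZ$ to~$\ZZ_2$ because $(-1)^m$ depends only on $m \bmod 2$. You are in fact more explicit than the paper about why the reduction modulo~$2$ induces a bijection of coloring sets preserving the crossing contributions; the paper contents itself with the one-line remark that the replacement is possible since $(-1)^m$ is determined by $m \bmod 2$.
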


Note that we replaced our $\Q$-module $\ZZ$ with~$\ZZ_2$, with the same module operation $m \op a = m+1$; this is possible since $(-1)^m$ is completely determined by $m \operatorname{mod} 2$.

The theorem interprets positive quandle cocycle invariants as shadow ones. This directly shows their invariance, and gives for free their shadow enhancement, answering  a question raised in~\cite{PosQuandleHom}. Moreover, this implies analogs of Propositions~\ref{P:TwistedScale} and~\ref{P:TwistedScale2} in the positive setting:

\begin{proposition}\label{P:PosScale}
A positive quandle cocycle invariant is symmetric with respect to~$0$ (that is, it does not change if all its elements change signs).
\end{proposition}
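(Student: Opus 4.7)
The natural plan is to deduce this as a special case of (the argument of) Proposition~\ref{P:TwistedScale} specialized to $\alpha=-1$, using the identification of positive quandle cocycle invariants with $-1$-twisted ones given by Lemma~\ref{L:WeightsPosTwisted} and Theorem~\ref{T:Pos}.

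Concretely, I would first invoke Theorem~\ref{T:Pos} to rewrite the positive $\w$-weight multiset as the $\ZZ_2$-shadow $\w_{-1}$-weight multiset with exterior region colored by~$0$. Next, observe that the $\Q$-module $\ZZ_2$ (with $m \op a = m+1$) consists of a single orbit, so Proposition~\ref{P:ExtColor} applies: we may replace the exterior color~$0$ by~$1$ without changing the multiset. Then I would note that changing the exterior color from~$0$ to~$1$ forces every region color in any compatible shadow coloring to be incremented by~$1$ in~$\ZZ_2$. By the definition $\w_{-1}(m,a,b) = (-1)^{-m}\w(a,b) = (-1)^m\w(a,b)$, this parity shift multiplies each summand in $\BWsh_{\w_{-1}}$ by~$-1$, and hence multiplies every $\w_{-1}$-weight by~$-1$.

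Putting the two observations together, the multiset of $\w_{-1}$-weights equals its own negation, which by Theorem~\ref{T:Pos} is exactly what the statement asserts for the positive invariant. No step is really an obstacle here: the only thing worth checking carefully is that the bijection $(\C,\Csh)\mapsto(\C,\Csh)\op c$ used implicitly via Proposition~\ref{P:ExtColor} does act on $\ZZ_2$-valued region colorings precisely by $\Csh \mapsto \Csh+1$, which is immediate from the module structure $m \op a = m+1$ on~$\ZZ_2$.
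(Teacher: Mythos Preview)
Your proposal is correct and follows the same route the paper intends: the paper simply states that Proposition~\ref{P:PosScale} is the $\alpha=-1$ analog of Proposition~\ref{P:TwistedScale}, and your argument is precisely the specialization of that proof (via Theorem~\ref{T:Pos} and Proposition~\ref{P:ExtColor}) to the $\ZZ_2$-shadow setting. One small remark: your final sentence slightly conflates two different bijections --- the one from Proposition~\ref{P:ExtColor} (which also alters arc colors via $\C \mapsto \C \op c$ and preserves weights) and the trivial one (same arc coloring, region coloring shifted by~$1$, which negates weights); your argument actually uses both independently, not one as an instance of the other, but this does not affect validity.
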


\begin{proposition}\label{P:PosScale2}
Suppose that for some odd~$k$ and some $c_1,\ldots, c_k \in \Q$ and $\varepsilon_1,\ldots,\varepsilon_k \in \{\pm 1\}$, relation~\eqref{E:RelInQ} holds for all $a \in \Q$. Then all non-trivial positive weights are of order~$2$ in the abelian group~$A$.
\end{proposition}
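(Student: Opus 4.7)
The plan is to deduce this directly from the twisted analog, Proposition~\ref{P:TwistedScale2}, via the identification between positive and $-1$-twisted quandle cocycle invariants established in Lemma~\ref{L:WeightsPosTwisted} (and packaged in Theorem~\ref{T:Pos}). So the first step is to pass from the positive setting to the $\alpha = -1$ twisted setting: for any $\Q$-coloring $\C$ of~$\D$ and any positive quandle $2$-cocycle~$\w$, Lemma~\ref{L:WeightsPosTwisted} gives
\[
\BWpos_{\w}(\D,\C) \;=\; \BWtw_{\w}(\D,\C,-1).
\]
Thus it is enough to prove the analogous statement about $(-1)$-twisted weights.

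Next I apply Proposition~\ref{P:TwistedScale2} with $\alpha = -1$. It yields that multiplication by
\[
(-1)^{\sum_{i=1}^{k}\varepsilon_{i}} - 1
\]
annihilates every $(-1)$-twisted $\w$-weight $\BWtw_{\w}(\D,\C,-1)$. The only arithmetic input is that, since each $\varepsilon_{i} \in \{\pm 1\}$, the sum $\sum_{i=1}^{k}\varepsilon_{i}$ has the same parity as~$k$; by hypothesis $k$ is odd, so $\sum \varepsilon_{i}$ is odd, and
\[
(-1)^{\sum_{i=1}^{k}\varepsilon_{i}} - 1 \;=\; -1 - 1 \;=\; -2.
\]

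Combining these two observations, $-2\cdot\BWpos_{\w}(\D,\C) = 0$ in~$A$ for every $\Q$-coloring $\C$, hence $2\cdot\BWpos_{\w}(\D,\C) = 0$. Equivalently, every non-trivial element appearing in the positive quandle cocycle invariant multiset has order dividing $2$ in~$A$, which is the claim. There is no serious obstacle here: the proof is a two-line reduction once Lemma~\ref{L:WeightsPosTwisted} and Proposition~\ref{P:TwistedScale2} are in hand; the only point worth stating explicitly is the parity argument that turns the scalar $\alpha^{\sum\varepsilon_i} - 1$ into~$-2$ precisely when $k$ is odd (and, conversely, into~$0$ when $k$ is even, which is why the odd hypothesis is essential).
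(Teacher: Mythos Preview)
Your proof is correct and follows precisely the route the paper intends: the paper states Proposition~\ref{P:PosScale2} without a separate proof, deriving it as the $\alpha=-1$ specialization of Proposition~\ref{P:TwistedScale2} via the identification of positive and $(-1)$-twisted weights (Lemma~\ref{L:WeightsPosTwisted}/Theorem~\ref{T:Pos}). Your explicit parity computation showing $(-1)^{\sum\varepsilon_i}-1=-2$ when $k$ is odd is exactly the missing line, and the reduction is sound.
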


\section{Twisted quandle cocycle invariants for links}\label{S:TwistedLinks}

For links with several components, twisted quandle cocycle invariants can be sharpened using a method developed in this section. In what follows, the diagram~$D$ represents a link with $k$ ordered components, numbered $1,\ldots,k$.

First, we need a more subtle notion of index. 
\begin{definition}\label{D:IndexRegionLink}
\begin{itemize}
\item The \emph{$j$th index} $\ind_j(\Delta)$ of a region~$\Delta$ in~$\D$ is the index of~$\Delta$ with respect to the $j$th component of~$\D$, in the sense of Definition~\ref{D:IndexRegion}.
\item The \emph{$j$th index} of a crossing~$\x$ is set to be $\ind_j(\x) = \ind_j(\Delta(\x))$. 
 \end{itemize}
\end{definition}

For a quandle $(\Q,\op)$, consider the equivalence relation induced by $a \sim a \op b$. Corresponding equivalence classes are called \emph{orbits}, their set is denoted by $\Orb(\Q)$, and the orbit of an $a \in \Q$ is denoted by $\O(a)$. A $\Q$-coloring $\C$ of~$\D$ assigns elements of the same orbit to all the arcs belonging to the same component of~$\D$; for the $j$th component, this orbit is denoted by $\Corb(j)$.

As before, let~$A$ be a module over a commutative ring~$R$. Take a collection $\oalpha = (\alpha_\O \in R^*)_{\O \in \Orb(\Q)}$.

\begin{definition}
\begin{itemize}
\item An \emph{$\oalpha$-twisted quandle $2$-cocycle} is a map $\w\colon\Q \times \Q \to A$ satisfying \eqref{E:BWR1} and
\begin{align}
\alpha_{\O(c)}^{-1}\w(a \op c, b \op c) -  \w(a, b) - \alpha_{\O(b)}^{-1}\w&(a \op b, c) + \w(a, c) \notag\\
&+ (\alpha_{\O(a)}^{-1} - 1)\w(b, c) =0.\label{E:BWR3Link}
\end{align}
\item The \emph{($\oalpha$-twisted) $\w$-weight} of a $\Q$-colored link diagram $(\D,\C)$ is the sum
\psset{unit=0.4mm}
\begin{align}
\BWtw_{\w}(\D,\C,\oalpha) &= \sum_{\begin{pspicture}(13,10)(-3,0)
\psline(0,10)(10,0)
\psline[border=0.8,arrowsize=3]{->}(10,10)(0,0)
\rput(-2,10){$\scriptstyle a$}
\rput(12,10){$\scriptstyle b$}
\end{pspicture}} \sgn(\x)\, \alpha_{\Corb(1)}^{-\ind_1(\xx)}\cdots \alpha_{\Corb(k)}^{-\ind_k(\xx)}\, \w(a, b).\label{E:TotalWeightLink}
\end{align}
\psset{unit=1mm}
\end{itemize}
\end{definition} 

\begin{theorem}\label{T:Link}
The multiset $\{\,\BWtw_{\w}(\D,\C,\oalpha) \,|\, \C \in \Col_{\Q}(\D)\,\}$ depends only on the underlying link (and not on the choice of its diagram~$\D$), and only on the $\oalpha$-twisted quandle cohomology class $[\w]$ of~$\w$. 
\end{theorem}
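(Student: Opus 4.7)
The plan is to generalize the shadow interpretation from Section~\ref{S:TwistedShadow} so that one twist parameter is attached to each orbit of~$\Q$ rather than a single global one. First I would introduce the $\Q$-module $\M = \ZZ^{\Orb(\Q)}$ with action $m \op a = m + e_{\O(a)}$, where $e_\O$ denotes the standard basis vector at $\O \in \Orb(\Q)$; since $\O(a\op b) = \O(a)$, Axioms~\eqref{E:SD} and~\eqref{E:Inv} hold routinely. For a $\Q$-colored link diagram $(\D,\C)$, an easy analog of Lemma~\ref{L:IndColoring} then shows that the unique $(\Q,\M)$-coloring with exterior region colored by~$0$, which I denote $(\C,\C^{ind})$, assigns to each region~$\Delta$ the tuple whose $\O$-coordinate equals $\sum_{j:\Corb(j)=\O}\ind_j(\Delta)$.

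Next, I would transport $\w$ to a candidate $\M$-shadow $2$-cocycle by setting
\[
\wsh(m,a,b) = \Bigl(\prod_{\O \in \Orb(\Q)} \alpha_\O^{-m_\O}\Bigr)\w(a,b).
\]
Substituting this into~\eqref{E:BWR3sh} and pulling out the common prefactor $\prod_\O \alpha_\O^{-m_\O}$, invertibility of each $\alpha_\O$ reduces the shadow cocycle condition cleanly to~\eqref{E:BWR3Link}; thus $\w$ is an $\oalpha$-twisted quandle $2$-cocycle iff $\wsh$ is an $\M$-shadow quandle $2$-cocycle. Moreover, at any crossing~$\x$ the prefactor carried by the source region $\Delta(\x)$ is exactly $\prod_\O \alpha_\O^{-\sum_{j:\Corb(j)=\O}\ind_j(\x)} = \prod_{j=1}^k \alpha_{\Corb(j)}^{-\ind_j(\x)}$, matching the coefficient in~\eqref{E:TotalWeightLink}. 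This yields the multi-component analog of Lemma~\ref{L:WeightsUnitaryShadow}, namely $\BWtw_\w(\D,\C,\oalpha) = \BWsh_{\wsh}(\D,\C,\C^{ind})$, and Reidemeister invariance of the multiset then follows by applying Theorem~\ref{T:ShadowInv} to $\M$, base color~$0$, and cocycle $\wsh$.

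For the cohomology-class statement, I would define the $\oalpha$-twisted coboundary of a $1$-cochain $\theta\colon \Q \to A$ so that its transport $(d_{\oalpha}\theta)^{sh}$ equals a fixed nonzero scalar multiple of the ordinary shadow coboundary $d\theta^{sh}$, where $\theta^{sh}(m,a) = \bigl(\prod_\O \alpha_\O^{-m_\O}\bigr)\theta(a)$ and $d = d^1_{\opp} - d^1_{\opr}$; this is the orbit-refined analog of the identity $(d_\alpha \theta)_\alpha = \alpha\, d(\theta_\alpha)$ exploited in the proof of Proposition~\ref{P:TwistedScale}. Lemma~\ref{L:ShadowCoboundary} then forces every $\oalpha$-twisted coboundary to have vanishing weight, and linearity of the weight in~$\w$ delivers invariance under the choice of cocycle representative.

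The main obstacle I anticipate is bookkeeping: verifying that the asymmetric placement of the factors $\alpha_{\O(a)}^{-1}$, $\alpha_{\O(b)}^{-1}$, $\alpha_{\O(c)}^{-1}$ in~\eqref{E:BWR3Link} exactly matches the prefactors produced when $\wsh$ is inserted into the shadow cocycle condition~\eqref{E:BWR3sh}. Once that correspondence is set up cleanly, the rest of the theorem reduces mechanically to Theorem~\ref{T:ShadowInv} and Lemma~\ref{L:ShadowCoboundary}.
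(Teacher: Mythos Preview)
Your proposal is correct and follows exactly the route the paper itself indicates: the paper's proof consists of a single sentence pointing to the shadow interpretation together with the lemma that $\bigoplus_{\O\in\Orb(\Q)}\ZZ e_\O$ with $m\op a=m+e_{\O(a)}$ is a $\Q$-module, leaving the analogs of Lemmas~\ref{L:IndColoring}--\ref{L:WeightsUnitaryShadow} and of the coboundary argument to the reader; you have written out precisely those details. One small remark: in the orbit-refined setting your transport identity actually comes out as $(d_{\oalpha}\theta)^{sh}=d(\theta^{sh})$ on the nose (scalar~$1$) once you set $(d_{\oalpha}\theta)(a,b)=\theta(a)-\alpha_{\O(b)}^{-1}\theta(a\op b)+(\alpha_{\O(a)}^{-1}-1)\theta(b)$, so the ``fixed nonzero scalar'' you mention is simply~$1$ rather than a genuine analog of the factor~$\alpha$ from the single-parameter case.
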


The theorem is proved by interpreting the constructed multisets as shadow invariants. The $\Q$-module one should use here is the following one:

\begin{lemma}
The set $\displaystyle \bigoplus_{\O \in \Orb(\Q)}\ZZ e_\O$ with $m \op a = m + e_{\O(a)}$ is a $\Q$-module.
\end{lemma}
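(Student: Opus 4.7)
The plan is to verify directly the two axioms \eqref{E:SD} and \eqref{E:Inv} required of a $\Q$-module operation. Write $M = \bigoplus_{\O \in \Orb(\Q)}\ZZ e_\O$, and note that $m \op a = m + e_{\O(a)}$ is well-defined because $\O(a)$ depends only on $a$. The key observation, used throughout, is that $a$ and $a \op b$ lie in the same orbit of $\Q$, so $e_{\O(a\op b)} = e_{\O(a)}$.

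First I would check self-distributivity. On the one hand,
\[
(m \op a) \op b = m + e_{\O(a)} + e_{\O(b)}.
\]
On the other,
\[
(m \op b) \op (a \op b) = m + e_{\O(b)} + e_{\O(a \op b)} = m + e_{\O(b)} + e_{\O(a)},
\]
using the key observation. Both expressions agree, giving \eqref{E:SD}.

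Next, I would define the inverse operation by $m \wop a = m - e_{\O(a)}$, which is the only possible choice, and verify \eqref{E:Inv}: $(m \op a) \wop a = m + e_{\O(a)} - e_{\O(a)} = m$, and likewise $(m \wop a) \op a = m$. No real obstacle arises here: the proof is a one-line computation in each case, and the only subtlety is the invariance of orbits under the quandle action, which is exactly the definition of $\Orb(\Q)$.
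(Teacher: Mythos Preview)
Your proof is correct and follows the same approach as the paper: both reduce self-distributivity to the identity $e_{\O(b)} + e_{\O(c)} = e_{\O(c)} + e_{\O(b \op c)}$, which holds because $b$ and $b \op c$ lie in the same orbit. The paper's version is simply terser, omitting the (evident) verification of~\eqref{E:Inv} that you spell out.
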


\begin{proof}
One has to check that $e_{\O(b)} + e_{\O(c)} = e_{\O(c)} + e_{\O(b \op c)}$, which is obvious since, by definition, $b$ and $b \op c$ belong to the same orbit.
\end{proof}

As usual, our shadow interpretation gives for free shadow and higher-dimensional generalizations of the theorem.

\begin{remark}
Following~\cite{Inoue}, the multiset from the theorem can be decomposed into multisets $\{\,\BWtw_{\w}(\D,\C,\oalpha) \,|\, \C \in \Col_{\Q}(\D), \Corb(1)=\O_1, \ldots,\Corb(k)=\O_k\,\}$, indexed by $k$-tuples of orbits $\O_1,\ldots, \O_k \in \Orb(\Q)$. The invariance of these smaller multisets follows from the fact that Reidemeister moves and induced local coloring changes preserve the orbit of the colors assigned to the arcs of a given link component.
\end{remark}

{\bfseries Acknowledgements.} The authors are grateful to the organizers of \emph{Knots and Low Dimensional Manifolds} (a satellite conference of Seoul ICM 2014), where the idea of this paper was born. They would like to thank Zhiyun Cheng for interesting discussions. This work was made possible thanks to JSPS KAKENHI Grants 25$\cdot$03315, 26287013, and 26400082. V.L. was also supported by a JSPS Postdoctral Fellowship for Foreign Researchers. 

\bibliographystyle{alpha}
\bibliography{biblio}

\end{document}